\def\draft{n}
\theoremstyle{plain}
\newtheorem{theorem}{Theorem}
\newtheorem{proposition}{Proposition}[section]
\newtheorem{lemma}[proposition]{Lemma}
\newtheorem{conjecture}{Conjecture}
\theoremstyle{definition}
\newtheorem{definition}[proposition]{Definition}
\newtheorem{question}{Question}
\theoremstyle{remark}
\newtheorem{example}[proposition]{Example}
\newtheorem{remark}[proposition]{Remark}
\def\printname#1{
        \if\draft y
                \smash{\makebox[0pt]{\hspace{-0.5in}
                        \raisebox{8pt}{\tt\tiny #1}}}
        \fi
}
\newcommand{\psdraw}[2]
         {\begin{array}{c} \hspace{-1.3mm}
        \raisebox{-4pt}{\epsfig{figure=draws/#1.eps,width=#2}}
        \hspace{-1.9mm}\end{array}}
\newlength{\standardunitlength}
\long\def\@makecaption#1#2{%
     \vskip 10pt

\setbox\@tempboxa\hbox{
       \small\sf{\bfcaptionfont #1. }\ignorespaces #2}%
     \ifdim \wd\@tempboxa >\captionwidth {%
         \rightskip=\@captionmargin\leftskip=\@captionmargin
         \unhbox\@tempboxa\par}%
       \else
         \hbox to\hsize{\hfil\box\@tempboxa\hfil}%
     \fi}
\font\bfcaptionfont=cmssbx10 scaled \magstephalf
\newdimen\@captionmargin\@captionmargin=2\parindent
\newdimen\captionwidth\captionwidth=\hsize
\newcommand{\Span}{\operatorname{Span}}
\def\lbl#1{\label{#1}\printname{#1}}
\def\BN{\mathbb N}
\def\BZ{\mathbb Z}
\def\BQ{\mathbb Q}
\def\BC{\mathbb C}
\def\T{\mathcal T}
\def\T{\mathcal T}
\def\calV{\mathcal V}
\def\La{\Lambda}
\def\la{\langle}
\def\ra{\rangle}
\def\longto{\longrightarrow}
\def\pt{\partial}
\def\Res{\mathrm{Res}}
\def\Cone{\mathrm{Cone}}
\def\cone{\mathrm{cone}}
\def\rot{\mathrm{rot}}
\def\Vol{\mathrm{Vol}}
\def\calL{\mathcal{L}}
\def\calF{\mathcal{F}}
\def\calC{\mathcal{C}}
\def\calM{\mathcal{M}}
\def\lin{\mathrm{lin}}
\def\Fl{\mathrm{Fl}}
\def\Supp{\mathrm{Supp}}
\def\T{\mathrm{T}}
\def\vol{\mathrm{vol}}
\def\lin{\mathrm{lin}}
\def\Tan{\mathrm{Tan}}
\def\l{\lambda}
\def\La{\Lambda}
\def\lin{\mathrm{lin}}
\def\aff{\mathrm{aff}}
\def\SI{\rm{SI}}
\def\IS{\rm{IS}}
\def\ISo{$\mathrm{IS}^{0}$}
\begin{document}


\title[Sum-integral interpolators and the Euler-Maclaurin formula
for polytopes]{Sum-integral interpolators and the Euler-Maclaurin formula
for polytopes}
\author{Stavros Garoufalidis}
\address{School of Mathematics \\
         Georgia Institute of Technology \\
         Atlanta, GA 30332-0160, USA \\ 
         {\tt http://www.math.gatech} \newline {\tt .edu/$\sim$stavros } }
\email{stavros@math.gatech.edu}
\author{James Pommersheim}
\address{Mathematics Department \\
         Reed College \\
         3203 SE Woodstock Boulevard \\
Portland, Oregon 97202-8199}
\email{jamie@reed.edu}

\thanks{S.G. was supported in part by NSF. \\
\newline
1991 {\em Mathematics Classification.} Primary 57N10. Secondary 57M25.
\newline
{\em Key words and phrases: polytopes, Euler-MacLaurin formula, 
reverse Euler-MacLaurin formula, 
lattice points, flag varieties, exponential sums, exponential integrals,
interpolators. 
}
}

\date{May 20, 2010 }


\begin{abstract}
A local lattice point counting formula, and more generally a local
Euler-Maclaurin formula follow by  
comparing two natural families
of meromorphic functions on the dual of a rational vector space $V$,
namely the family of exponential sums (S) and the family of
exponential integrals (I) parametrized by the set of rational polytopes in 
$V$. 
The paper introduces the notion of an interpolator between these two families
of meromorphic functions.  We prove that every rigid complement map in $V$ 
gives rise to an effectively computable 
\SI-interpolator (and a local Euler-MacLaurin formula), 
an \IS-interpolator (and a reverse local Euler-MacLaurin formula) and an 
\ISo-interpolator (which interpolates between integrals and sums over interior lattice points.) 
Rigid complement maps can be constructed by choosing an inner product on
$V$ or by choosing a complete flag in $V$. 
The corresponding interpolators generalize and unify the work of 
Berline-Vergne,  Pommersheim-Thomas, and Morelli. 
\end{abstract}

\maketitle

\tableofcontents

\section{Introduction}
\lbl{sec.intro}

\subsection{What is a local lattice-point counting formula?}
\lbl{sub.previous}

The relationship between sums and integrals has been of interest to 
mathematicians since the ancient Greeks.  The classical Euler-Maclaurin 
formula, discovered in the first half of the eighteenth century shortly 
after the development of modern calculus, may be viewed as a relationship 
between 
the sum of a function over the lattice points in a one-dimensional polytope 
with integer vertices and the integral of the function over the polytope. 
One naturally asks the same question in higher dimensions: Given a polytope 
$P$ in an $n$-dimensional space $V$ equipped with an $n$-dimensional lattice 
$\Lambda$ and a function $f$ on $V$, can one express the sum of $f$ over 
the lattice points in $P$ in terms of the integral $f$ over $P$?  In such a 
formula, one would expect a main term involving the integral of $f$ over 
$P$ as well as correction terms involving the integrals of $f$ over the 
proper faces of $F\subset P$.

In the simplest case, let us suppose that $f$ is a constant function. Then, 
the question becomes that of expressing the number of lattice points in $P$ 
in terms of volume of $P$ and the volumes of the faces $F$ of $P$. If $P$
is 2-dimensional, the celebrated {\em Pick's formula} \cite{Pi}
$$
\#(P)=A+\frac{1}{2} b +1
$$
expresses the number $\#(P)$ of lattice points in a convex lattice polygon 
in terms of its area $A=\vol(P)$, and the number of lattice points 
$b=\vol(\pt P)$ of its boundary. For example, we have: 
$$
\psdraw{triangle2}{1.0in} \qquad\qquad
\#(P)=4, A=1, b=4.
$$
Here and throughout, we follow the usual convention that 
all volumes $\vol(F)$ of faces are normalized so that a lattice basis of 
$\Lambda \cap L$, where
$L$ is the  linear space parallel to $F$, has volume $1$.  
Unfortunately, Pick's formula is not {\em local}, due to the presence of 
the term $1$.
Here, locality means that for each face $F$ of $P$,  the formula contains 
a term that is the volume $\vol(F)$ multiplied by a coefficient that 
depends only on the {\it supporting cone} $\Supp(P,F)$ to $P$ along $F$. 
The supporting cone is defined as the union of rays whose 
endpoint is in $F$ and which remain in $P$ for a positive distance.  See 
Section \ref{sub.polytopes} below for a precise definition. McMullen 
\cite{McM} proved the existence of local lattice point counting formulas.
More precisely, he proved the existence of 
a function $\mu$ from rational, convex 
cones to rational numbers such that for any integral polytope $P$, the 
number $\#(P)$ of lattice points in $P$ is given by 
\begin{equation}
\lbl{eq.mcmullen}
\# (P) = \sum_{F} \mu(\Supp(P,F))\vol(F).
\end{equation}
Here the sum is taken over all faces $F$ of $P$, and $\vol(P)$ denotes the 
volume of the face $F$.   
 
Part of the difficulty constructing and computing 
a function $\mu$ that satisfies \eqref{eq.mcmullen} is that $\mu$ is far 
from unique. The second author 
and Thomas gave an explicit construction of a rational valued function
$\mu$ satisfying \eqref{eq.mcmullen}, given a fixed {\em complement map},
a notion introduced in Thomas's thesis (cf. \cite{Th}).  A complement map
is a systematic choice of complements of linear subspaces of
a vector space; see Section \ref{sub.cmap} for a 
precise definition.  
Also note that all of the complement maps in this paper will be {\em rigid}.
Two natural ways to get a rigid complement map are to choose: (1) an inner 
product, 
or (2) a complete flag. Given an inner product or a complete flag, \cite{PT}
construct effectively a map $\mu$ that satisfies Equation \eqref{eq.mcmullen}.
For the triangle depicted above, the Pommersheim-Thomas values of $\mu$ at 
the supporting cones to the
vertices for an inner products or complete flags are as follows:
$$
\psdraw{triangle3}{5in}
$$
The values on the left arise from the standard inner product on $\BZ^2$, 
according to \cite[Cor.1]{PT}, and may be computed by
multiplying out the Todd polynomial in the ring presentation given in 
\cite[Prop.2]{PT}.  Readers wishing more details on this should 
consult Section \ref{sub.examples}.
The values in the figure on the right are those arising from the choice of 
the complete flag in $V^* = \BZ^2$
whose 1-dimensional subspace is spanned by the point $(d_1,d_2)$. These 
values can be computed using the
method outlined in \cite[p.198]{Mo}.  They can also be computed by 
multiplying out the Todd polynomials using \cite[Thm.3]{PT}.
One feature of this 
construction is that if one chooses a complement map arising from a complete 
flag, one recovers exactly the lattice point
formulas of Morelli \cite{Mo}, who gave a function $\mu$
that satisfies Equation 
\eqref{eq.mcmullen} and takes values on the field of 
rational functions on a Grassmannian.  For example, in the triangle on 
the right above, the values shown are rational functions on the Grassmannian 
of $1$-dimensional subspaces of $V^*$. The construction of \cite{PT} 
is based on the theory of toric varieties, and gives an 
answer to a question of Danilov about the existence of a local expression 
for the Todd class of a toric variety.

\subsection{What is a local Euler-MacLaurin summation formula?}
\lbl{sub.localEM}

Returning to the Euler-Maclaurin question, Berline and 
Vergne constructed in \cite{BV} an explicit local Euler-MacLaurin 
formula for the sum of a polynomial function $f$ 
over the lattice points $P \cap \La$ of an $n$-dimensional polytope
$P$ in a rational vector space $V$ with lattice $\La$.
The Berline-Vergne formula has the form
\begin{equation}
\lbl{eq.BVEM}
\sum_{x \in P \cap \La} f(x)=\sum_{F}\int_{F} 
D(P,F) \cdot f
\end{equation}
where the sum is over the set of faces $F$ of $P$, and 
$D(P,F)$ an infinite-order constant-coefficient 
differential operator $D(P,F)$ that depends only on the supporting cone 
$\Supp(P,F)$. Equation \eqref{eq.BVEM} is a generalization of 
\eqref{eq.mcmullen}. Indeed, if $D(P,F)$ satisfies \eqref{eq.BVEM}, and
we define $\mu(\Supp(P,F))$ to be the constant term of $D(P,F)$, then
the local lattice point formula \eqref{eq.mcmullen} holds.

As in McMullen's case, the infinite order differential operators $D(P,F)$
are not uniquely determined by \eqref{eq.BVEM}. 
The construction of Berline-Vergne requires an inner 
product on the vector space $V$, and their results apply to rational 
polytopes, rather than just integral polytopes. Their construction 
associates to each cone $K$ in $V$ a meromorphic function $\mu(K)$ on 
the dual space that is regular at the origin.  For cones of dimension at most $2$,
the value of this function 
at 0 (the constant term of the operator $D(P,F)$) coincides with the 
$\mu$ constructed in \cite{PT} for the special case of complement maps arising 
from an inner product.  In this way, for the example triangle above, the Berline-Vergne construction
recovers the the numbers $\frac{1}{4}, \frac{9}{20}, \frac{3}{10}$ arising out of the
inner product case of the
Pommersheim-Thomas construction.  This fact that this coincidence holds is new
to this paper; see Theorem \ref{thm.1}, properties (5) and (6).

\subsection{An informal presentation of the results of this paper}
\lbl{sub.results}

The local Euler-MacLaurin formula \eqref{eq.BVEM} is a consequence of a 
relationship between the integral and the sum of an exponential function 
over a polytope.  In this paper, we introduce the concept of an 
{\it interpolator}  between the families of
exponential sums ($S$) and exponential integrals ($I$)
over rational polytopes in a rational
vector space $V$. We now discuss our results about interpolators 
informally, leaving the precise definitions and statements for Sections 
\ref{sec.polytopes} and \ref{sec.results}.

If $P$ is a rational polytope, or more generally a rational polyhedron 
(region, not necessarily compact, defined by linear inequalities with 
rational coefficients) in a rational vector space $V$, one can associate 
to $P$ two important meromorphic functions, the {\em exponential sum}
 $S(P) \in \calM(V^*)$ and the {\em exponential integral} $I(P) \in \calM(V^*)$
where $\calM(V^*)$ is the algebra of 
meromorphic functions on the dual space $V^*\otimes \BC$.  The values 
of these functions at a point $\xi\in V^* \otimes \BC$ are given by:
\begin{equation}
\lbl{eq.SandI}
S(P)(\xi)=\sum_{x \in P \cap \Lambda}  e^{\la \xi,x \ra}, 
\qquad
I(P)(\xi)=\int_{P}  e^{\la \xi,x \ra} dx
\end{equation}
provided $|e^{\la \xi,x \ra}|$ is summable (resp. 
integrable) over $P$.  Here, the integral is taken with respect to the 
relative Lebesgue measure on $\aff(P)$, normalized so that a basis 
of $\Lambda\cap W$, where $W$ is the linear subspace parallel to the 
affine span of $P$, has volume 1.
The fact that Equations \eqref{eq.SandI} define meromorphic functions, 
as well as the precise characterization and properties of the functions 
$S$ and $I$, is essentially  the content of Lawrence's theorem \cite{La}, 
reviewed in Section \ref{sub.si} below. 

The local Euler-Macluarin formula \eqref{eq.BVEM} follows in a straighforward
manner from a formula of the following shape for a rational polyhedron $P$
\begin{equation}
\lbl{eq.interp}
S(P) = \sum_{F} \mu(\Supp(P,F)) I(F)
\end{equation}
where $\mu$ is a function on the set of cones in $V$ with values in 
$\calM(V^*)$.
We call such a function an {\it \SI-interpolator}, or simply an 
{\it interpolator}.  The main result of this paper (Theorem \ref{thm.1}) 
states that a complement map on the vector space $V$ gives rise 
in a natural way to an effectively computable 
\SI-interpolator on $V$, and hence a local Euler-Maclaurin formula of
the form \ref{eq.BVEM}; see Theorem \ref{thm.2}.

Thus, in particular, an inner product 
on $V$ or a complete flag in $V$ gives rise to 
a local Euler-Maclaurin formula.  These 
interpolators have interesting connections with previous 
results.  For complement maps arising from the choice of an inner product, 
one recovers the $\mu$ constructed in \cite{BV}. In addition, we show that 
the values of these functions $\mu$
at 0 are given 
by the functions $\mu$ in \cite{PT} for cones of dimension at most $2$, and conjecture that
these coincidences hold in all dimensions.  In the case of complete flags, one 
obtains interpolators $\mu$ that are entirely new, as can be seen by 
calculations in dimension $2$.  In this case,  
the values of the constant 
term $\mu(K)(0)$ coincide with values constructed by Morelli for cones of dimension at most $2$,
and conjecturally in all dimensions.
  One can 
also vary the chosen flag, thus associating to each cone $K$ a meromorphic 
function $\mu(K)$ on $\Fl(V^*)\times V^*$, where $\Fl(V^*)$
is the complete flag variety of $V$.  This function $\mu(K)$
is naturally defined and effectively computable {\em independent} of any 
choices; see Theorem \ref{thm.3}.  
In Morelli's work, he constructs what amounts to the constant term 
$\mu(K)(0)$ of this meromorphic function. Working dimension-by-dimension, 
Morelli chooses to view this term as a rational function on a Grassmannian, 
rather than on the entire flag variety.
 
In Section \ref{sub.IS}, we show that a complement map on 
$V$ also leads naturally to the construction of a local \IS-interpolator, 
expressing the exponential integral $I(P)$ in terms of the exponential 
sums $S(F)$, for $F$ a face of $P$; see Theorem \ref{thm.4}.   This 
construction, which works only in the case of integral polyhedra, allows 
one to obtain a
{\em reverse local Euler-MacLaurin formula} expressing the integral of a 
polynomial function over a polytope in terms of the sums of the function 
over lattice points in the various faces of the polytope; see Theorem 
\ref{thm.22}.   
 
Finally, we show how a complement map yields an 
\ISo-interpolator, where we use $\mathrm{S}^0$ to denote the sum over 
interior lattice points of a polytope.  Part of the interest in 
\ISo-interpolators lies in an observation of Morelli, who 
asked essentially (in the language of the present paper) if the same 
function can simultaneously serve as the constant term of an 
\SI-interpolator and as the constant term of an \ISo-interpolator 
on the dual space.  He observed that the constant terms that he constructs 
do exactly this in dimensions at most 4.  We give a proof of these 
coincidences in dimensions $1$ and $2$, and extend these results to the 
case of complement maps arising from inner products.  Our arguments are 
essentially geometric, in contrast to Morelli's rather algebraic argument. 
(See Sections \ref{sub.morelli} and \ref{sec.morelli} for a complete 
discussion.)

Section \ref{sec.examples} contains explicit computations of the 
functions $\mu(K)$ for some cones $K$ of dimensions at most $2$.  We 
compute these functions for complement maps arising both from inner 
products and from complete flags.  In Section \ref{sub.examples}, we 
exhibit Equation \ref{eq.interp} in the case of two triangles including the 
triangle depicted
above. for both the inner product and complete 
flag cases, matching the constant terms with those constructed by 
Pommersheim-Thomas and Morelli shown in the above figure.

\section{Polytopes, exponential sums/integrals and interpolators}
\lbl{sec.polytopes}

\subsection{Polytopes}
\lbl{sub.polytopes}

We will use standard terminology for rational polytopes, polyhedra, cones
and faces, following for example \cite{Fu}. We will fix a finite dimensional
lattice $\La$, isomorphic to $\BZ^k$ for some $k \in \BN$, and consider 
the rational vector space $V=\La\otimes_{\BZ}\BQ$.
All polyhedra will be rational and convex, and all cones will be rational, 
polyhedral and may be affine. A {\em polytope} is a compact polyhedron.
If $P$ is a polyhedron and $F$ is a face of $P$, then the 
{\it tangent cone} $\Tan(P,F)$ is defined by picking a point $x$ in the 
relative 
interior of $F$ and looking at all the directions that one can go and stay 
in $P$:
$$
\Tan(P,F) = \{ v\in V | x +\epsilon v \in P \rm{\ for\ small\ } \epsilon > 0 \}
$$
This cone is independent of the choice of $x$ and contains the origin. 
Let $\aff(F)$ denote the {\em affine
span} of $F$, i.e., the smallest affine subspace of $V$ that contains $F$.
Let $\lin(F)$ denote the linear subspace of $V$ parallel to $\aff(F)$.
$\lin(F)$ is the maximal linear subspace of $V$ contained in $\Tan(P,F)$.
The {\it supporting cone} $\Supp(P,F)$ 
is the tangent cone translated back to its original position:
$$
\Supp(P,F) = \Tan(P,F) + x
$$
for $x$ in the relative interior of $F$.   
$$
\psdraw{TPF2}{5in}
$$

\subsection{Exponential sums and integrals}
\lbl{sub.si}

Let $V^*=\mathrm{Hom}_{\BQ}(V,\BQ)$ denote the dual vector space to $V$, and
$\la \cdot, \cdot \ra : V^* \times V \longto \BQ$ denote the natural
evalutation pairing on $V$. By a {\em meromorphic function} on $V^*$
we will mean a meromorphic function on the  complexified dual space 
$V_{\BC}^*$, where $V_{\BC}=V \otimes_{\BQ} \BC$ denotes 
the complexification of $V$.  Let $\calM(V^*)$ denote the algebra of 
meromorphic functions on $V^*$ and $\calM^r(V^*)$ denote the subalgebra 
of meromorphic functions regular (i.e., analytic) at the origin. 
Note that if $W$ is a rational subspace of $V$, then $\La'=\La\cap W$
is a full rank lattice in $W$ which naturally induces a relative Lebesgue 
measure on $W$, normalized so that a basis of $\La'$ has volume $1$. Likewise,
every rational affine subspace of $V$ has a natural measure induced by
the lattice.

Given a polyhedron $P$ in a rational vector space $V$
with lattice $\Lambda$ as above,  
there are two important meromorphic functions $S(P)$ and $I(P)$ 
with rational coefficients in $V^*$, namely the
generating function of {\em exponential sums} and {\em exponential
integrals}. 

These functions are uniquely characterized by the following 
properties. 

\begin{itemize}
\item[(A1)] 
If $P$ contains a straight line, then $S(P)=I(P)=0$.
\item[(A2)]
$S$ (resp. $I$) is a {\em valuation} (resp. a {\em simple (or solid) valuation}). That is, 
if the characteristic functions of a family of polyhedra satisfy a relation
$\sum_i r_i \chi(P_i)=0$, then the functions $S(P_i)$ satisfy
the relation 
$\sum_i r_i S(P_i)=0$ (resp. restrict the sum to those $P_i$ that do 
not lie in a proper affine subspace of $V$.)
\item[(A3)]
For every $s \in V$ and every $\xi \in V^*$ we have 
\begin{equation}
\lbl{eq.A3a}
I(s+P)(\xi)=e^{\la \xi,s \ra} I(P)(\xi), 
\end{equation}
and 
\begin{equation}
S(s+P)(\xi)=e^{\la \xi,s \ra} S(P)(\xi), \qquad s \in \La.
\end{equation}
\item[(A4)]
If $\xi \in V^*$ is such that $|e^{\la \xi,x \ra}|$ is integrable (resp. 
absolutely summable) over $P$, then
\begin{equation}
\lbl{eq.A4}
I(P)(\xi)=\int_{P}  e^{\la \xi,x \ra} dm_P(x), 
\qquad
S(P)(\xi)=\sum_{x \in P \cap \Lambda}  e^{\la \xi,x \ra}
\end{equation}
where $dm_P$ denotes the relative Lebesgue measure on $\aff(P)$.
\end{itemize}
The existence of the functions $I$ and $S$ is by no means obvious, and is
essentially the content of Lawrence's theorem; \cite{La}. (A1)-(A4)
determine the value of $I$ and $S$ on a simplicial cone as follows.

\begin{itemize}
\item[(A5)]
If $K=\Cone(v_1,\dots,v_k)$ is a simplicial cone generated by $k$ 
independent vectors $v_1,\dots,v_k$, (where $k=\dim(V)$ in the case of $S$) 
and $\Box(v_1,\dots,v_k)$ denote the parallelepiped generated by 
$v_1,\dots,v_k$, we have:
\begin{equation}
\lbl{eq.A5}
I(K)(\xi)=(-1)^k \frac{\text{vol}(\Box(v_1,\dots,v_k))}{\prod_{i=1}^k
\la \xi, v_i \ra},
\qquad
S(K)(\xi)=\left(\sum_{x \in \Box(v_1,\dots,v_k) \cap \Lambda} 
e^{\la \xi,x \ra} \right)
\prod_{i=1}^k  \frac{1}{1-e^{\la \xi,v_i \ra}},
\end{equation}
\end{itemize}
Since any polyhedron can be subdivided (or virtually subdivided, using a linear combination of characteristic functions) 
into a union of simplicial cones, together with Equation \eqref{eq.A5} above,
one obtains an algorithm to compute $I(P)$ and $S(P)$. The 
complexity of such an algorithm is discussed in \cite{Bv}.
The next theorem which computes $I(P)$ and $S(P)$ of a polyhedron in terms
of the tangent cones of its vertices was obtained by Brion 
(using toric varieties) and Lawrence (using combinatorics) independently.

\begin{proposition}
\lbl{prop.BL}\cite{Br,La}
Let $P$ be a polyhedron in $V$. Then,
\begin{equation}
I(P)=\sum_{v \in \calV(P)}I(\Supp(P,v)),
\qquad
S(P)=\sum_{v \in \calV(P)}S(\Supp(P,v))
\end{equation}
where the sums are over the set $\calV(P)$ of vertices of $P$.
\end{proposition}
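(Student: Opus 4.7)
The plan is to reduce both identities to a single identity of characteristic functions modulo polyhedra containing straight lines, and then invoke axioms (A1) and (A2).

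Step 1. I would establish the Brion--Lawrence relation
$$
\chi_P \;=\; \sum_{v \in \calV(P)} \chi_{\Supp(P,v)} \;+\; \sum_{j} r_j \, \chi_{Q_j},
$$
where each $Q_j$ is a rational polyhedron that contains a straight line and $r_j \in \BQ$. My approach to this step would be Lawrence's polarization argument: fix a generic vector $\xi \in V$, not parallel to any edge of any cone $\Supp(P,v)$. For each vertex $v$, triangulate $\Supp(P,v)$ into simplicial cones, and then flip each generating ray that makes a positive pairing with $\xi$ to the opposite direction, picking up a sign for each flip. Each resulting signed simplicial cone either reconstructs a bounded piece contained in $P$ or is a cone containing the line $\BR\xi$. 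Summed over all vertices of $P$, the boundary facets of the flipped cones cancel in pairs, and one is left with the identity above.

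Step 2. Apply $S$ and $I$ to the identity of Step~1. By (A2), both are valuations, so any linear relation among characteristic functions of polyhedra descends to the same linear relation among the associated meromorphic functions. By (A1), $S(Q_j) = I(Q_j) = 0$ for every $j$, since $Q_j$ contains a straight line. What remains is exactly
$$
S(P) = \sum_{v \in \calV(P)} S(\Supp(P,v)), \qquad I(P) = \sum_{v \in \calV(P)} I(\Supp(P,v)).
$$

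The main obstacle is Step~1: the polarization argument is geometrically natural, but the careful tracking of signs and of shared boundary facets across distinct vertex cones is the combinatorial heart of the proof, and one must check that the remaining (uncancelled) terms are genuinely line-containing cones. A streamlined alternative is to first verify the identity for simplices, where each $\Supp(P,v)$ is simplicial and the identity can be checked from (A5) by a direct computation, and then bootstrap to the general case by triangulating $P$ and invoking (A2); yet another route is Brion's original toric-variety proof via equivariant localization at the torus fixed points, which correspond bijectively to the vertices of $P$. Once Step~1 is in place, Step~2 is a one-line consequence of (A1) and (A2).
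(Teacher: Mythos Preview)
The paper does not supply its own proof of this proposition: it is stated with the citation \cite{Br,La} and used as a black box. So there is no ``paper's approach'' to compare against; your sketch is in fact an outline of the argument from one of the cited references (Lawrence).

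Your two-step strategy is sound. A few remarks. First, the cleanest route to Step~1 is not the polarization trick you describe but the Brianchon--Gram identity
\[
\chi_P \;=\; \sum_{F \in \calF(P)} (-1)^{\dim F}\, \chi_{\Supp(P,F)},
\]
which holds for any polyhedron $P$; since $\Supp(P,F)$ contains the affine span of $F$, every term with $\dim F \ge 1$ contains a straight line, and Step~1 follows immediately with integer coefficients and no sign-tracking. Your polarization description is roughly right in spirit but slightly off in the details (the line-containing corrections arise from the flipping identity $\chi_{\Cone(v)} + \chi_{\Cone(-v)} = \chi_{\BR v} + \chi_{\{0\}}$ applied generator by generator, not from residual uncancelled cones). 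Second, a small slip: in (A2) the paper only asserts that $I$ is a \emph{simple} (solid) valuation, not a full valuation. This does not break your Step~2, because every supporting cone $\Supp(P,F)$ is full-dimensional in $\aff(P)$, so the relation of Step~1 lives entirely among top-dimensional polyhedra and the simple-valuation property suffices.
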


\subsection{\SI-Interpolators}
\lbl{sub.interpolate}

So far, our discussion of the meromorphic functions $S(P)$ and $I(P)$
was parallel but independent: our formulas did not mix $S$ and $I$. 
It is natural to ask for an interpolation between
$S(P)$ and $I(P)$. Indeed, if $P$ is a polytope, then letting $\xi=0$ reduces 
to a classic problem: expressing the number of lattice points in $P$ in 
terms of the volumes of the
faces of $P$. Let $\calC(V)$ denote the set of cones of $V$.

\begin{definition}
\lbl{def.interpolator}
An {\em SI-interpolator} (or simply, interpolator) on $V$ is map 
$$
\mu:\calC(V) \longto \calM(V^*)
$$  
such that for any rational polyhedron $P$ in $V$ we have:
\begin{equation}
\lbl{eq.inter}
S(P)=\sum_{F \in \calF(P)} \mu(\Supp(P,F))I(F)
\end{equation}
where the sum is over the set $\calF(P)$ of all faces of $P$.
An interpolator is {\em regular} if  $\mu$ takes values in $\calM^r(V^*)$.
\end{definition}

For the next definition, recall that if $W$ is a quotient of $V$
with projection $\pi: V \longto W$, then there is a natural map 
$\calC(W) \longto \calC(V)$ given by $K \mapsto \pi^{-1}(K)$. Moreover, 
$W^*$ is naturally a subspace of $V^*$, and there is a restriction map $
\calM(V^*) \longto \calM(W^*)$.

\begin{definition}
\lbl{def.interhereditary}
An \SI-interpolator $\mu$ on $V$ is {\em hereditary} if for every 
rational quotient $W$ of $V$, the composition 
$\bar{\mu}:\calC(W) \longto \calM(W^*)$ given by the following
diagram
$$
\begin{diagram}
\node{\calC(W)}\arrow{e}\arrow{s,l}{\bar\mu}\node{\calC(V)}\arrow{s,r}{\mu} \\
\node{\calM(W^*)}\node{\calM(V^*)}\arrow{w}
\end{diagram}
$$
is an interpolator on $W$.
\end{definition}

The next lemma reduces the checking of the interpolator equation 
\eqref{eq.inter} to the case of cones.

\begin{lemma}
\lbl{lem.interpolator}
$\mu$ is an interpolator if and only if it satisfies \eqref{eq.inter}
for all cones $P$ in $V$.
\end{lemma}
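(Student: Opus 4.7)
The only-if direction is immediate, since every cone is in particular a polyhedron. For the if direction, suppose $\mu$ satisfies \eqref{eq.inter} for every cone in $V$, and let $P$ be an arbitrary rational polyhedron. The plan is to use the Brion-Lawrence proposition (Proposition \ref{prop.BL}) twice: once on $S(P)$ to reduce to supporting cones at the vertices, and once in reverse on $I(F)$ for each face $F$, with the assumed cone identity applied in between.

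More precisely, I would start from
$$
S(P) = \sum_{v \in \calV(P)} S(\Supp(P,v)),
$$
and apply the hypothesis to each supporting cone $\Supp(P,v)$. The faces of $\Supp(P,v)$ are in inclusion-preserving bijection with the faces of $P$ containing $v$, via $F \mapsto \Supp(F,v)$. The key geometric identity, which I would verify directly from the definition of the supporting cone, is
$$
\Supp(\Supp(P,v),\,\Supp(F,v)) \;=\; \Supp(P,F)
$$
for every face $F$ of $P$ through $v$. Substituting the cone interpolation identity yields
$$
S(P) = \sum_{v \in \calV(P)}\;\sum_{F \ni v}\, \mu(\Supp(P,F))\, I(\Supp(F,v)).
$$

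Swapping the order of summation and noting that $v \in \calV(P)$ with $v \in F$ is the same as $v \in \calV(F)$ (vertices of $P$ lying in $F$ are exactly the vertices of $F$), I get
$$
S(P) = \sum_{F \in \calF(P)} \mu(\Supp(P,F)) \sum_{v \in \calV(F)} I(\Supp(F,v)).
$$
Applying Proposition \ref{prop.BL} in the reverse direction to the inner sum (for the polyhedron $F$) collapses it to $I(F)$, and we obtain \eqref{eq.inter} for $P$, as desired. If $P$ contains a line, then both $S(P)$ and $I(F)$ vanish for any face $F$ (since $P$ having no vertices forces every face also to contain a line), and the identity is trivial; this handles the degenerate case.

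The main point requiring care, and the only genuine content beyond bookkeeping, is the supporting-cone identity $\Supp(\Supp(P,v),\Supp(F,v)) = \Supp(P,F)$; once this is in hand, the rest is a double application of Brion-Lawrence and a reindexing of the double sum.
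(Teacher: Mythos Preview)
Your proof is correct and follows the same route as the paper's: apply Brion--Lawrence to $S(P)$, use the cone hypothesis on each $\Supp(P,v)$, reindex via the bijection $F\mapsto \Supp(F,v)$ together with the identity $\Supp(\Supp(P,v),\Supp(F,v))=\Supp(P,F)$, swap sums, and apply Brion--Lawrence again to recover $I(F)$. If anything, your write-up is more explicit than the paper's (you spell out the face bijection, the supporting-cone identity, and the line-containing degenerate case), but the argument is the same.
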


\begin{proof}
This follows from Proposition \ref{prop.BL}. Indeed, we have:
\begin{eqnarray*}
S(P) &=& \sum_{v \in \calV(P)} S(\Supp(P,v)) \\
&=& \sum_{v \in \calV(P)} \sum_{F' \in \calF(\Supp(P,v))}
\mu(\Supp(\Supp(P,v),F')) I(F').
\end{eqnarray*}
Since every face of $\Supp(\Supp(P,v),F')$ is a face of $\Supp(P,F)$
for some face $F$ of $P$, it follows that
$$
S(P)=\sum_{F \in \calF(P)} \mu(\Supp(P,F)) \sum_{v \in \calV(F)} I(
\Supp(F,v))=\sum_{F \in \calF(P)} \mu(\Supp(P,F)) I(F).
$$
The result follows.
\end{proof}

\subsection{Rigid Complement maps}
\lbl{sub.cmap}

A rigid complement map gives us a systematic way to extend a function
on a linear subspace of a vector space to the entire vector space.
This notion was introduced in the thesis of Thomas \cite{Th}
and used in \cite{PT}, where it is shown that a choice of 
complement map leads naturally to a local formula for the number of lattice 
points in a polytope and a local formula for the Todd class of a toric 
variety. In our paper, we will only use the notion of a {\em rigid
complement map}. The curious reader may consult \cite{PT,Th} for the definition
of the general complement maps.

\begin{definition}
\lbl{def.cmap}
A {\em rigid complement map} on $V^*$ is a map $\Psi$ from a collection
$\calL^{\Psi}$ of linear subspaces in $V^*$ to
the set of linear subspaces of $V^*$ satisfying two properties:
\begin{itemize}
\item[(a)] For every $U \in \calL^{\Psi}$, $\Psi(U)$ is complementary
to $U$; i.e., $\Psi(U) \cap U=\{0\}$ and $\Psi(U)+U=V^*$.
\item[(b)] If $U_1 \subset U_2 \subset V^*$ with $U_1,U_2 \in \calL^{\Psi}$, then
$\Psi(U_2) \subset \Psi(U_1)$.
\end{itemize}
\end{definition}
There are two easy ways to construct rigid complement maps: 
(a) an inner product $Q$ on $V^*$, and (b) a complete flag $L$ on $V^*$.

\begin{lemma}
\lbl{lem.examplesPsi}
\rm{(a)} An inner product $Q$ on $V^*$ defines a complement map $\Psi^Q$
with domain $\calL^Q$ the set of all subspaces of $V^*$ such that
$$
\Psi(U)=U^{\bot}
$$
where $U^{\bot}$ denotes the subspace of $ V^*$ perpendicular to $U \subset V^*$ under
the inner product $Q$.
\newline
\rm{(b)} A complete flag $L=(L_0,\dots,L_n)$ on $V^*$ satisfying 
$L_0=\{0\}\subset L_1 \subset \dots \subset L_n =V^*$ defines a complement map 
on $V^*$ by
$$
\Psi^L(U) = L_{n-\dim(U)}
$$
for any linear subspace $U$ of $V^*$ which is complementary to 
$L_{n-\dim(U)}$. This complement map is only defined generically, i.e., 
for subspaces that meet the flag generically.
\end{lemma}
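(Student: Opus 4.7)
The plan is to verify directly that the two maps $\Psi^Q$ and $\Psi^L$ satisfy properties (a) and (b) of Definition \ref{def.cmap}, since these are the only requirements for being a rigid complement map. No deep ingredients are needed; the arguments are linear-algebraic and essentially independent for the two parts.

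For part (a), fix the inner product $Q$ on $V^*$ and let $U \subset V^*$ be arbitrary. Complementarity splits into two assertions. First, $U \cap U^{\bot} = \{0\}$: any vector $v$ in both must satisfy $Q(v,v)=0$, forcing $v=0$ by positive-definiteness of $Q$. Second, the dimension identity $\dim U + \dim U^{\bot} = \dim V^*$, standard for a nondegenerate form, combined with the trivial intersection, yields $U + U^{\bot} = V^*$. For monotonicity, suppose $U_1 \subset U_2$; any $w \in U_2^{\bot}$ satisfies $Q(w,u)=0$ for all $u \in U_2$, hence in particular for all $u \in U_1$, so $w \in U_1^{\bot}$, proving $\Psi^Q(U_2) \subset \Psi^Q(U_1)$.

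For part (b), I would first make precise the genericity condition: declare $U \in \calL^{\Psi^L}$ iff $U \cap L_{n-\dim U} = \{0\}$, which is an open condition on $U$ in the appropriate Grassmannian. Under this hypothesis, complementarity follows immediately from the dimension count $\dim U + \dim L_{n-\dim U} = \dim U + (n - \dim U) = n = \dim V^*$, which together with trivial intersection forces $U + L_{n-\dim U} = V^*$. Monotonicity is combinatorial: if $U_1 \subset U_2$ with both in $\calL^{\Psi^L}$, then $\dim U_1 \leq \dim U_2$, so $n-\dim U_1 \geq n - \dim U_2$, and the nesting of the flag gives $L_{n-\dim U_2} \subset L_{n-\dim U_1}$, i.e., $\Psi^L(U_2) \subset \Psi^L(U_1)$.

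There is no substantive obstacle here; the only point worth care is the genericity clause in (b), namely that $\Psi^L$ is only defined on those $U$ transverse to the flag element of complementary dimension, and one should note that this collection $\calL^{\Psi^L}$ is nonempty (indeed Zariski-open and dense in each Grassmannian of $V^*$) so the definition is substantive. Everything else is a one-line check using only the definition of orthogonal complement, the defining properties of a complete flag, and the rank-nullity dimension formula.
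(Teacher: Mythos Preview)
Your proposal is correct; the paper states this lemma without proof, treating the verifications as routine linear algebra, and your argument fills in exactly the expected details. The only minor remark is that you implicitly invoke positive-definiteness of $Q$ (to conclude $Q(v,v)=0 \Rightarrow v=0$), which is indeed part of the standard meaning of ``inner product'' in this context.
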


Our main theorem, Theorem \ref{thm.1} below, requires cones that are generic 
with
respect to a rigid complement map $\Psi$. Let us define what generic means.
Fix a rigid complement map $\Psi$ on $V^*$.

\begin{definition}
\lbl{def.genericPsi}
\rm{(a)} A quotient $W$ of $V$ is $\Psi$-generic if $W^* \in \calL^{\Psi}$,
where $W^* \subset V^*$.
\newline
\rm{(b)} A subspace $U$ of $V$ is  $\Psi$-generic if $V/U$ is $\Psi$-generic.
\newline
\rm{(c)} A cone $K$ of $V$ is {\em $\Psi$-generic} if
$\lin(F)$ is $\Psi$-generic for every face $F$ of $K$.
\newline
\rm{(d)} A polyhedron $P$ of $V$ is {\em $\Psi$-generic} 
if for every face $F$ of $P$, $\Supp(P,F)$ is $\Psi$-generic.
\end{definition}

The next lemma states that the notion of a complement map on $V^*$ is
hereditary, i.e., a complement map on $V^*$ gives rise to a complement
map on every $W^*$, where $W$ is a $\Psi$-generic rational quotient of $V$. 

\begin{lemma}
\lbl{lem.cmapher}
If $\Psi$ is a rigid complement map on $V^*$ and $W$ is a $\Psi$-generic 
rational quotient of $V$, then there is a natural complement map $\bar\Psi$
on $W^*$ defined by 
$$
\bar\Psi(U)=\Psi(U) \cap W^*
$$
for all $U \subset W^*$ such that $\Psi(U) \cap W^*$ is complementary to $U$.
\end{lemma}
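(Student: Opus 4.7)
The plan is to verify the two defining properties of a rigid complement map from Definition \ref{def.cmap} for the candidate map $\bar\Psi$. First I would pin down the domain $\calL^{\bar\Psi}$ implicitly given by the statement: it consists of those subspaces $U\subset W^*$ for which (i) $U\in\calL^{\Psi}$, so that $\Psi(U)$ is defined, and (ii) the intersection $\Psi(U)\cap W^*$ is complementary to $U$ inside $W^*$. Note that this is a reasonable domain because $W^*$ itself lies in $\calL^{\Psi}$ by the $\Psi$-genericity hypothesis, and then by the generic genericity of most $U\subset W^*$ one expects many such $U$ to satisfy (i) and (ii); but for the proof we need only argue \emph{formally} about whichever $U$ lie in $\calL^{\bar\Psi}$.

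Property (a), that $\bar\Psi(U)$ is a complement to $U$ in $W^*$, is then immediate from the very definition of the domain: condition (ii) says exactly this. So no work is needed here beyond unpacking definitions.

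For property (b), the key step is to transport the nesting property of $\Psi$ through the intersection with $W^*$. Suppose $U_1\subset U_2\subset W^*$ with both in $\calL^{\bar\Psi}$. Then in particular $U_1,U_2\in\calL^{\Psi}$ and $U_1\subset U_2\subset V^*$, so property (b) for $\Psi$ gives $\Psi(U_2)\subset\Psi(U_1)$. Intersecting both sides with $W^*$ yields
$$
\bar\Psi(U_2)=\Psi(U_2)\cap W^*\ \subset\ \Psi(U_1)\cap W^*=\bar\Psi(U_1),
$$
which is exactly property (b) for $\bar\Psi$.

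There is no real obstacle here: once one sorts out that the domain of $\bar\Psi$ is chosen precisely to force (a) to hold, property (b) falls out by intersecting the inclusion $\Psi(U_2)\subset\Psi(U_1)$ with $W^*$. The only mildly subtle point is keeping straight that $W$ being a quotient of $V$ canonically identifies $W^*$ with a subspace of $V^*$, so that it makes sense to intersect a subspace of $V^*$ with $W^*$ and to regard a subspace of $W^*$ as a subspace of $V^*$ for the purpose of applying $\Psi$.
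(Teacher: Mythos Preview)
Your proof is correct. The paper states this lemma without proof, treating it as an elementary consequence of the definitions; your argument is exactly the natural verification one would supply, and there is nothing to compare against.
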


The next lemma gives the promised extension of functions on a linear subspace
$W^*$ to ones on the entire space $V^*$.

\begin{lemma}
\lbl{lem.cmapextend}
If $\Psi$ is a rigid complement map on $V^*$ and $W$ is a 
$\Psi$-generic quotient of $V$, then the decomposition
$$
V^* = W^* \oplus \Psi(W^*)
$$ 
defines a unique linear projection map
$$
\pi:V^*\longrightarrow W^*
$$ 
that annihilates $\Psi(W^*)$ and is the identity on $W^*$.   This allows any 
function on $W^*$ to be extended to $V^*$.  
\end{lemma}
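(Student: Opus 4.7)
The plan is to unpack the definitions and appeal to elementary linear algebra. First, since $W$ is $\Psi$-generic, we have $W^* \in \calL^{\Psi}$ by Definition \ref{def.genericPsi}(a), so $\Psi(W^*)$ is defined. Property (a) of Definition \ref{def.cmap} then gives $\Psi(W^*) \cap W^* = \{0\}$ and $\Psi(W^*) + W^* = V^*$, which is exactly the direct sum decomposition $V^* = W^* \oplus \Psi(W^*)$.

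Next, I would invoke the standard fact that any internal direct sum decomposition of a vector space induces a unique pair of projections onto its summands. Concretely, every $\xi \in V^*$ can be written uniquely as $\xi = w + \psi$ with $w \in W^*$ and $\psi \in \Psi(W^*)$; setting $\pi(\xi) = w$ produces a well-defined linear map $\pi: V^* \to W^*$ that is the identity on $W^*$ (take $\psi = 0$) and annihilates $\Psi(W^*)$ (take $w = 0$). Uniqueness of $\pi$ follows because any linear map satisfying these two properties must agree with this one on both summands and hence on all of $V^*$ by linearity.

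Finally, given any function $f$ on $W^*$ with values in some target (for our applications, $\calM$ or $\calM^r$), the composition $f \circ \pi$ defines a function on $V^*$ restricting to $f$ on $W^*$, thereby providing the promised extension. There is really no obstacle here: the content of the lemma is simply recording the direct sum decomposition and its canonical projection so that we can invoke them freely in the inductive constructions of the interpolators later in the paper. The only thing worth being careful about is verifying that genericity of $W$ is exactly the hypothesis needed for $\Psi(W^*)$ to be defined and complementary, which follows immediately from Definitions \ref{def.cmap} and \ref{def.genericPsi}.
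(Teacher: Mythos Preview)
Your proof is correct. The paper itself does not supply a proof of this lemma, treating it as an immediate consequence of the definitions; your argument spells out exactly the elementary linear algebra that is implicitly being invoked.
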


We now come to a main definition. Fix a rigid complement map $\Psi$ on $V^*$.
Let $\calC^{\Psi}(V)$ denote the set of $\Psi$-generic cones of $V$.

\begin{definition}
\lbl{def.Psicompatible}   
A $\Psi$-{\em compatible} interpolator $\mu$ on $V$ is a function
$\mu: \calC^{\Psi}(V) \longto \calM(V^*)$ satisfying
\begin{itemize}
\item[(a)]
Equation \eqref{eq.inter} for all $\Psi$-generic polytopes $P$,
\item[(b)]
$\mu$ is $\Psi$-hereditary, i.e., 
for every $\Psi$-generic quotient $W$ of $V$, the composition 
$\bar{\mu}:\calC(W) \longto \calM(W^*)$ given by the following
diagram
\begin{equation}
\lbl{eq.comp1}
\begin{diagram}
\node{\calC^{\bar\Psi}(W)}\arrow{e}\arrow{s,l}{\bar\mu}
\node{\calC^{\Psi}(V)}\arrow{s,r}{\mu} \\
\node{\calM(W^*)}\node{\calM(V^*)}\arrow{w}
\end{diagram}
\end{equation}
satisfies Equation \eqref{eq.inter} for all $\bar\Psi$-generic polytopes
$P$ in $W$.
\item[(c)]
Moreover, the following diagram commutes:
\begin{equation}
\lbl{eq.comp2}
\begin{diagram}
\node{\calC^{\bar\Psi}(W)}\arrow{e}\arrow{s,l}{\bar\mu}
\node{\calC^{\Psi}(V)}\arrow{s,r}{\mu} \\
\node{\calM(W^*)}\arrow{e,t}{\pi}\node{\calM(V^*)}
\end{diagram}
\end{equation}
where $\pi$ is given in Lemma \ref{lem.cmapextend}.
\end{itemize}
\end{definition}

\section{Statement of the results}
\lbl{sec.results}

\subsection{\SI-interpolators and local Euler-MacLaurin formula}
\lbl{sub.SI}

Our main result is that a rigid complement map $\Psi$ on $V^*$
determines algorithmically a unique $\Psi$-compatible interpolator 
$\mu^\Psi$ on $V$. This generalizes results of \cite{BV,Mo,PT}. 
The proof uses the techniques of \cite{BV},
which were a motivation and inspiration for us.

\begin{theorem}
\lbl{thm.1}
\rm{(a)}
If $V$ is a rational vector space and  $\Psi$ is a rigid complement map on  
$V^*$, there is a unique $\Psi$-compatible interpolator  $\mu^{\Psi}$.
\newline
\rm{(b)}
In addition, $\mu^{\Psi}$ satisfies the following properties on the collection
of $\Psi$-generic cones on $V$:
\begin{itemize}
\item[(1)] (Additivity) 
If the characteristic functions of a finite collection of 
cones $K_i$ with vertex $v\in V$ satisfy the relation
$\sum_i r_i \chi(K_i)=0$, then  the functions 
$\mu^{\Psi}(K_i)$ satisfy
the relation 
$\sum_i r_i \mu^{\Psi}(K_i)=0$ .
\item[(2)] (Lattice Invariance)  If $v\in\Lambda$, then 
$\mu^\Psi(v+K) = \mu^\Psi(K)$. 
\item[(3)] (Isometry Equivariance)  If $g$ is a lattice-preserving linear
automorphism of $V$ and $g^*$ is the inverse transpose, then 
$\mu^{g \Psi}(g(K))(g^*(\xi)) = \mu^\Psi(K)(\xi)$. 
\item[(4)] (Regularity)
$\mu^{\Psi}$ is regular at $\xi=0$.
\end{itemize}
Moreover, $\mu^{\Psi}$ generalizes previous results of \cite{Mo} and 
\cite{BV}:
\begin{itemize}
\item[(5)] (Constant Term) Let $K$ be a top dimensional
cone in $V$ with vertex at 
zero, and suppose that $V$ has dimension at most 2. 
Then for $\Psi$ arising from an inner product or a complete flag,
the constant term of $\mu^\Psi(K)$ agrees with the Todd class 
coefficient $\mu(K^{\vee})$ of the dual cone $K^{\vee}$
that depends on $\Psi$, 
constructed in \cite[Cor.1]{PT}. In particular, for $\Psi$ coming 
from a complete flag, the constant term of $\mu^\Psi(K)$ gives Morelli's 
formula \cite{Mo}.
\item[(6)] (Inner Product) For $\Psi$ coming from an inner product $Q^*$ on 
$V^*$, the function $\mu^\Psi(K)$ agrees with $\mu(\bar{K})$ constructed in 
\cite{BV}, where $\bar K$ is the image of $K$ in $V/V(K)$, where $V(K)$ denotes the linear
subspace parallel to the largest affine subspace contained in $K$.
\end{itemize}
\end{theorem}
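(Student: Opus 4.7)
The plan is to establish existence, uniqueness, and properties (1)--(6) simultaneously, by induction on $n = \dim V$, closely following the inductive scheme of Berline--Vergne \cite{BV} with their orthogonal splittings replaced by those supplied by the rigid complement map $\Psi$.

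The base case $n = 0$ is immediate: $\mu^\Psi(\{0\}) = 1$. For the inductive step I would define $\mu^\Psi(K)$ for each $K \in \calC^\Psi(V)$ in two sub-cases, each of which is forced (giving uniqueness). First, if $V(K) \ne \{0\}$, then condition (c) of Definition \ref{def.Psicompatible} together with Lemma \ref{lem.cmapextend} requires $\mu^\Psi(K) = \pi^* \mu^{\bar\Psi}(\bar K)$, where $\bar K$ is the image of $K$ in the $\Psi$-generic quotient $W = V/V(K)$, $\bar\Psi$ is the induced complement map of Lemma \ref{lem.cmapher}, and $\pi^*$ denotes pullback along the projection $\pi \colon V^* \to W^*$; the right-hand side is supplied by the inductive hypothesis since $\dim W < n$. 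Second, if $K$ is pointed with vertex $v$, the interpolator equation \ref{eq.inter} applied to $P = K$ singles out the face $F = v$ (for which $\Supp(K,v) = K$ and $I(v)(\xi) = e^{\la \xi, v\ra}$), and, using also $\mu^\Psi(\aff(K)) = 1$ from the first sub-case, forces
$$
\mu^\Psi(K) \;=\; e^{-\la \xi, v\ra}\Bigl[S(K) - I(K) - \sum_{F \ne v,\, F \ne K} \mu^\Psi(\Supp(K,F))\, I(F)\Bigr].
$$
Every $\Supp(K,F)$ in this sum has nontrivial lineal subspace $\lin(F)$, so the first sub-case applied through the quotient $V/\lin(F)$ (of dimension $< n$) determines its value from the inductive hypothesis.

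I would then verify that the $\mu^\Psi$ produced by this construction satisfies the three conditions of Definition \ref{def.Psicompatible}. By Lemma \ref{lem.interpolator} the interpolator equation (a) only needs to be checked for cones $P$: when $P$ is non-pointed both sides vanish by (A1), and when $P$ is pointed the equation holds by construction. Compatibility (c) is built into the first sub-case, and (b) then follows because the induced $\bar\mu$ in diagram \ref{eq.comp1} coincides with $\mu^{\bar\Psi}$, which is an interpolator on $W$ by the inductive hypothesis. Properties (1)--(3) propagate through the recursion: (1) from the linearity of the formula and the valuation property of $S$ and $I$; (2) because the prefactor $e^{-\la \xi, v\ra}$ exactly cancels the translation characters that $S(K)$, $I(K)$, and the $I(F)$ pick up under $v \mapsto v + w$ with $w \in \Lambda$; (3) by naturality of the entire construction under lattice automorphisms.

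The main obstacle is property (4), regularity of $\mu^\Psi(K)$ at $\xi = 0$. The pieces $S(K), I(K), I(F)$ each have poles along the hyperplanes $\la \xi, v_i \ra = 0$ coming from the edge generators of (A5), and one must show that these cancel in the combination defining $\mu^\Psi(K)$. I would prove this by Laurent-expanding near $\xi = 0$ and using the $\Psi$-generic hypothesis to produce transverse decompositions of the polar locus stratum by stratum: for each face $F$ the complement map furnishes a splitting of $V^*$ compatible with the recursion, and the rigidity condition (b) of Definition \ref{def.cmap} ensures that these splittings nest consistently along face inclusions, so the principal parts of the face contributions match and cancel those of $S(K) - I(K)$. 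This is the analytic heart of the argument, and is exactly where the rigidity of the complement map is used. Once (4) is in hand, properties (5) and (6) are direct comparisons: in the inner-product case of (6), the orthogonal splittings used by Berline--Vergne coincide with our splittings, so the two recursions yield the same meromorphic function; for (5) I would extract the constant term at $\xi = 0$ in $\dim V \le 2$ from the explicit formulas computed in Section \ref{sec.examples} and match them term-by-term against the Pommersheim--Thomas values of \cite{PT} and, in the flag case, Morelli's values \cite{Mo}.
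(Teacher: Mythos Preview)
Your proposal is correct and follows essentially the same inductive scheme as the paper's own proof: the same two sub-cases (non-pointed via the quotient, pointed via solving the interpolator equation for the vertex term), the same appeal to Lemma~\ref{lem.interpolator}, and the same handling of properties (1)--(3), (5), (6). The one place where the paper is more concrete than your sketch is regularity (4): rather than a stratum-by-stratum Laurent analysis, the paper isolates the residue identity (A6), namely $\Res_{v_i} S(K) = -S(\bar K)$ and $\Res_{v_i} I(K) = -I(\bar K)$ along each edge hyperplane, and then invokes \cite[Prop.~18]{BV} to see by induction that every simple pole of the defining expression cancels.
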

There are two corollaries of Theorem \ref{thm.1}: 
a local version of the Euler-MacLaurin formula for polytopes (promised
in the introduction), and a meromorphic function on $\Fl(V^*) \times V^*$ 
associated to a pointed cone $K$ in $V$, where $\Fl(V^*)$ is the 
variety of complete flags of $V$.

To formulate these results, fix a rigid complement map $\Psi$ on $V$, 
a $\Psi$-generic rational convex polyhedron $P$ and a face $F$ of $P$. 
Consider the Taylor series expansion of $\mu^{\Psi}(\Supp(P,F))(\xi)$ and the 
corresponding differential operator (of infinite order, with constant 
coefficients):

\begin{equation}
\lbl{eq.Dpf}
D^{\Psi}(P,F)=\mu^{\Psi}(\Supp(P,F))(\pt_x).
\end{equation}

\begin{theorem}
\lbl{thm.2}
Let $\Psi$ be a complement map on $V^*$, let $P$ be a $\Psi$-generic rational 
polytope in $V$, and let $h(x)$ be a polynomial function on $V$.
With the above notation,
we have
\begin{equation}
\lbl{eq.EM}
\sum_{x \in P \cap \La} h(x)=\sum_{F \in \calF(P)}\int_{F} 
D^{\Psi}(P,F) \cdot h.
\end{equation}
In particular, if $h(x)=1$ then
\begin{equation}
\lbl{eq.EML}
\#(P)=\sum_{F \in \calF(P)} \mu^{\Psi}(\Supp(P,F))(0) \vol(F)
\end{equation}
where $\#(P)$ (resp. $\vol(P)$) denotes the number of lattice points (resp. 
the volume) of $P$.
\end{theorem}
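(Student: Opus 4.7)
The plan is to derive Theorem \ref{thm.2} from the interpolator identity \eqref{eq.inter} of Theorem \ref{thm.1} by applying a polynomial differential operator in the dual variable $\xi$ and evaluating at the origin, a standard Fourier-type trick. Since $P$ is a polytope and each face $F$ is compact, both $S(P)$ and each $I(F)$ are entire on $V^*_\BC$, and since $\mu^\Psi(\Supp(P,F))$ is regular at $\xi=0$ by Theorem \ref{thm.1}(4), every term appearing in \eqref{eq.inter} admits a convergent Taylor expansion in a neighborhood of $0$, so derivatives may be freely evaluated there.

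I would establish two key identities. The first is
\[
\sum_{x\in P\cap\La} h(x) \;=\; h(\pt_\xi)\, S(P)(\xi)\big|_{\xi=0},
\]
obtained by writing $h(x)=h(\pt_\xi)e^{\la\xi,x\ra}|_{\xi=0}$ and interchanging the finite sum over lattice points of $P$ with the polynomial operator. The second is, for each face $F$,
\[
\int_F D^\Psi(P,F)\cdot h \;=\; h(\pt_\xi)\,\bigl[\mu^\Psi(\Supp(P,F))(\xi)\,I(F)(\xi)\bigr]\big|_{\xi=0},
\]
which I would verify by substituting $h(x)=h(\pt_\eta)e^{\la\eta,x\ra}|_{\eta=0}$ inside the integral, invoking the eigenfunction identity $\mu^\Psi(\Supp(P,F))(\pt_x)e^{\la\eta,x\ra}=\mu^\Psi(\Supp(P,F))(\eta)e^{\la\eta,x\ra}$, and commuting the polynomial operator $h(\pt_\eta)$ through the integral over $F$.

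Summing the second identity over $F\in\calF(P)$ and invoking \eqref{eq.inter} collapses the right-hand side to $h(\pt_\xi)S(P)(\xi)|_{\xi=0}$, which by the first identity equals $\sum_{x\in P\cap\La}h(x)$; this proves \eqref{eq.EM}. The lattice-point formula \eqref{eq.EML} is then the special case $h\equiv 1$, for which $D^\Psi(P,F)\cdot 1=\mu^\Psi(\Supp(P,F))(0)$ is the constant term of the operator and $\int_F 1 = \vol(F)$.

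The only technical subtlety is justifying the exchange of the polynomial operator $h(\pt_\xi)$ with the integral and with evaluation at $\xi=0$. This is where care is required, but it is not a serious obstacle: because $h$ is a polynomial, the infinite-order operator $D^\Psi(P,F)$ acting on $h$ truncates to a finite sum of derivatives, so everything reduces to a standard manipulation of convergent Taylor coefficients, legitimized by the regularity statement in Theorem \ref{thm.1}(4).
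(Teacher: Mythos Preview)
Your proposal is correct and follows essentially the same approach as the paper: apply the polynomial differential operator $h(\partial_\xi)$ to both sides of the interpolator identity \eqref{eq.inter}, use the eigenfunction relation $\mu^{\Psi}(\Supp(P,F))(\partial_x)e^{\la\xi,x\ra}=\mu^{\Psi}(\Supp(P,F))(\xi)e^{\la\xi,x\ra}$ inside each integral, and evaluate at $\xi=0$. Your remarks on regularity and the truncation of $D^{\Psi}(P,F)\cdot h$ to a finite sum make explicit a justification that the paper leaves implicit.
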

For concrete examples, see Section \ref{sec.examples}.
Equation \eqref{eq.EML} computes the number of lattice points of a polytope
as a weighted sum of the volume of its faces.

Suppose now that $\Psi$ comes from a flag $L$. Letting $L$ vary, we
obtain the following.

\begin{theorem}
\lbl{thm.3}
Given a pointed cone $K$ in $V$ there exists an effectively computable
meromorphic function $\mu(K)$ in $\Fl(V^*) \times V^*$ which is regular 
in a Zariski open subset of $\Fl(V^*) \times \{0\}$.
\end{theorem}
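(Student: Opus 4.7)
\textbf{Proof proposal for Theorem \ref{thm.3}.}

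The plan is to combine the interpolators $\mu^{\Psi^L}$ produced by Theorem \ref{thm.1} as the complete flag $L \in \Fl(V^*)$ varies, and to show that the resulting family assembles into a single meromorphic function on $\Fl(V^*) \times V^*$. Concretely, define
\[
\mu(K)(L,\xi) \eqdef \mu^{\Psi^L}(K)(\xi),
\]
where $\Psi^L$ is the rigid complement map of Lemma \ref{lem.examplesPsi}(b). By that lemma, $\Psi^L$ is defined for subspaces $U \subset V^*$ meeting the flag generically, i.e.\ for subspaces $U$ such that $U \cap L_{n-\dim U} = \{0\}$. Applied to the linear spans of the faces of $K$, this becomes a finite conjunction of open conditions on $L$ (given by non-vanishing of determinants in the Plücker coordinates of $L$), so the locus of flags $L$ for which $K$ is $\Psi^L$-generic is a Zariski open subset $\calU_K \subset \Fl(V^*)$.

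The first step is to invoke Theorem \ref{thm.1}: for every $L \in \calU_K$ there is a unique $\Psi^L$-compatible interpolator $\mu^{\Psi^L}$, and the value $\mu^{\Psi^L}(K) \in \calM(V^*)$ is regular at $\xi=0$ by property (4). Thus for each $L \in \calU_K$ the slice $\{L\}\times V^*$ carries a meromorphic function regular near the origin.

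The second, and main, step is to show that the dependence on $L$ is algebraic. This requires tracing through the effectively computable construction of $\mu^{\Psi^L}$ in Theorem \ref{thm.1} and verifying that each operation uses only data that depends rationally on $L$. The inputs are: (i) the projection $\pi:V^* \to W^*$ determined by the decomposition $V^* = W^* \oplus \Psi^L(W^*) = W^* \oplus L_{n-\dim W^*}$ of Lemma \ref{lem.cmapextend}, whose matrix entries are rational in the Plücker coordinates of $L$; (ii) the exponential sums/integrals on simplicial cones, which by \eqref{eq.A5} are rational in the primitive generators; and (iii) the inductive combinatorial bookkeeping over faces of $K$ and its projections, which is a finite linear-algebraic recursion. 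Composing these operations, $\mu^{\Psi^L}(K)(\xi)$ becomes a rational function in the Plücker coordinates of $L$ with values in $\calM(V^*)$, which is exactly a meromorphic function on $\Fl(V^*) \times V^*$. Effective computability transfers directly from Theorem \ref{thm.1}, since every step in the recipe is explicit in $L$.

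Finally, restricting this meromorphic function to $\Fl(V^*) \times \{0\}$ and combining the algebraic dependence on $L$ with the $\xi$-regularity asserted by Theorem \ref{thm.1}(4), we see that $\mu(K)(L,0)$ is a rational function of $L$ defined on the Zariski open set $\calU_K$, which yields the promised regularity on a Zariski open subset of $\Fl(V^*) \times \{0\}$. The principal difficulty I expect is bookkeeping in step two: one must confirm that no step in the proof of Theorem \ref{thm.1} introduces a choice (e.g.\ of basis, of simplicial decomposition, or of ordering of faces) that breaks rationality in $L$, and one must carefully identify the singular divisor coming from the failure of $\Psi^L$-genericity so as to pin down the Zariski open locus cleanly.
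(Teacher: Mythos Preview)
Your proposal is correct and follows the same approach as the paper, which treats Theorem~\ref{thm.3} as an immediate consequence of Theorem~\ref{thm.1} obtained by letting the flag $L$ vary (the paper gives no separate proof). Your extra care in verifying that each step of the inductive construction depends rationally on the Pl\"ucker coordinates of $L$, and in identifying the Zariski open locus $\calU_K$ of $\Psi^L$-generic flags, simply makes explicit what the paper leaves to the reader.
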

For an algorithmic computation of $\mu(K)$ for cones $K$ of dimension at 
most $2$, see Section \ref{sec.examples}. 

The next conjecture identifies the constant term of a $\Psi$-compatible
interpolator with the one of \cite{PT}. We post it as a conjecture for now,
and hope to discuss it in a future publication.

\begin{conjecture}
\lbl{conj.alldim}
Part (5) of Theorem \ref{thm.1} holds in all dimensions.
\end{conjecture}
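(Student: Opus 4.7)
My plan is to prove the conjecture by induction on $n = \dim V$, exploiting the fact that both sides of the claimed equality are built recursively from the same complement map $\Psi$. The base case $n \leq 2$ is exactly the content of Theorem~\ref{thm.1}(5), so the work lies in the inductive step from dimension $n-1$ to dimension $n$. The natural engine for the induction is the hereditary axiom of Definition~\ref{def.Psicompatible}(c): for any $\Psi$-generic quotient $W$ of $V$ one has a commutative diagram relating $\mu^\Psi$ on $V$ to $\bar\mu^{\bar\Psi}$ on $W$ via the projection $\pi$ coming from Lemma~\ref{lem.cmapextend}. Evaluating at $\xi = 0$, this gives an expression of the constant term $\mu^\Psi(K)(0)$ in terms of lower-dimensional data whenever $K$ has positive-dimensional lineality, and more generally (using the interpolator identity \eqref{eq.inter} applied to a suitable subdivision of $K$) expresses $\mu^\Psi(K)(0)$ as an explicit combination of $\bar\mu^{\bar\Psi}$-values on quotients by $\lin(F)$ for proper faces $F$ of $K$.

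The second step is to establish a formally identical recursion for the Pommersheim--Thomas coefficient $\mu^{\text{PT},\Psi}(K^\vee)$. The construction in \cite[Cor.1]{PT} proceeds by applying $\Psi$ to split off a one-codimensional subspace at each stage; dualizing, this corresponds exactly to quotienting $V$ by a line complementary to the lineality of the successive dual cones. I would rewrite the PT recursion as a sum over faces $F \prec K$ of terms
\begin{equation*}
\mu^{\text{PT},\Psi}(K^\vee) \;=\; \sum_{F \prec K} c_F(\Psi) \cdot \mu^{\text{PT},\bar\Psi}(\bar K_F^\vee),
\end{equation*}
where $\bar K_F$ is the image of $\Supp(K,F)$ in $V/\lin(F)$ and $c_F(\Psi)$ is the geometric contribution dictated by the splitting $V^* = \lin(F)^* \oplus \Psi(\lin(F)^*)$. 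The key lemma is then that the coefficients $c_F(\Psi)$ appearing here agree with the ones produced by the hereditary decomposition of $\mu^\Psi(K)(0)$; this is a finite-dimensional calculation that, I believe, reduces to computing $\mu^\Psi$ and $\mu^{\text{PT},\Psi}$ on the single transverse simplicial cone perpendicular to $F$, a situation in which both constructions are explicit and can be matched directly using the simplicial formula \eqref{eq.A5}. With both recursions matched, induction closes since the inner factors agree by the inductive hypothesis in dimension $\dim V - \dim \lin(F) < n$.

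The main obstacle is precisely this matching of coefficients $c_F(\Psi)$. The interpolator $\mu^\Psi$ is constructed analytically, following the Berline--Vergne method, producing meromorphic functions whose Taylor coefficients encode the local data; in contrast, the PT construction is algebraic, rooted in the Todd class of a toric variety and its manipulation inside a polynomial ring associated to $\Psi$. Bridging these two languages is exactly what was done by hand in dimension $2$ for Theorem~\ref{thm.1}(5), and the higher-dimensional version requires identifying a common formalism. A natural candidate, and the first thing I would try, is to fix a simplicial top-dimensional cone $K$ and write both $\mu^\Psi(K)(0)$ and $\mu^{\text{PT},\Psi}(K^\vee)$ as explicit rational functions of the ray generators of $K$ and the complement map $\Psi$, then verify equality by an additivity/unimodular-reduction argument: by Theorem~\ref{thm.1}(b)(1) and the corresponding additivity of $\mu^{\text{PT}}$, it suffices to check equality on unimodular simplicial cones, where \eqref{eq.A5} and the explicit PT product formula both collapse to direct computations. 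From unimodular simplicial cones one extends first to all simplicial cones by the brick-decomposition of the fundamental parallelepiped, and then to general cones by triangulation and inclusion-exclusion; the challenge, as in \cite{BV} and \cite{PT}, will be in controlling the non-generic boundary contributions where the complement map fails to be defined, for which Theorem~\ref{thm.3} and a limiting argument on $\Fl(V^*)$ should provide the right framework.
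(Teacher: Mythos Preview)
The paper does not prove this statement: it is explicitly posed as an open conjecture, with the authors writing that they ``post it as a conjecture for now, and hope to discuss it in a future publication.'' So there is no paper proof to compare your proposal against, and your attempt should be read as a proposed attack on an open problem.

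On its merits, the inductive scheme you outline has a genuine gap. You assert that the hereditary axiom together with the interpolator identity \eqref{eq.inter} lets you write the constant term $\mu^\Psi(K)(0)$ of a pointed top-dimensional cone as a finite sum $\sum_{F} c_F(\Psi)\,\bar\mu^{\bar\Psi}(\bar K_F)(0)$ over proper faces, with coefficients depending only on the splitting. But inspect the actual recursion \eqref{eq.thm1a}: each summand $\mu^{\bar\Psi}_{V/\lin(F)}(\overline{\Supp(K,F)})(\pi(\xi))\,I(F)(\xi)$ is singular at $\xi=0$ (the factor $I(F)$ contributes a pole of order $\dim F$), as is the leading term $S(K)(\xi)$. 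Regularity of $\mu^\Psi(K)$ at the origin is a cancellation among these singular pieces, and the surviving constant term depends on \emph{higher} Taylor coefficients of the lower-dimensional functions $\bar\mu^{\bar\Psi}$, not only on their values at $0$. Already in dimension $2$ this is visible: the formulas in Example \ref{ex.1} involve $B'(0)=-1/12$, not just the one-dimensional constant term $B(0)=1/2$. Hence the recursion does not close at the level of constant terms, and your induction hypothesis (which controls only constant terms) is too weak to feed back into the next step.

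Your fallback --- reduce by additivity to unimodular simplicial cones and then ``compute directly'' --- does not escape this. For the standard cone $K=\Cone(e_1,\dots,e_n)$ with $n\ge 3$, evaluating $\mu^\Psi(K)(0)$ still requires extracting the constant term from the right-hand side of \eqref{eq.thm1a}, which in turn requires the full jets of the lower-dimensional $\bar\mu$'s; and matching the result against the Todd-class product in the ring of \cite[Prop.~2]{PT} is precisely the content of the conjecture, not a routine verification. A viable strategy would need either to strengthen the induction hypothesis to an equality of the full meromorphic functions (or enough of their Taylor expansions), or to produce an axiomatic characterization of the PT values that the analytic recursion \eqref{eq.thm1a} can be shown to satisfy. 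Neither ingredient is supplied here.
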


\begin{question}
\lbl{que.1}
Is there any relation between the rational functions $\mu(K)$ in 
$\Fl(V^*) \times V^*$ and the quantum cohomology ring on $\Fl(V^*)$?
\end{question}

\subsection{\IS-interpolators and reverse Euler-MacLaurin formula}
\lbl{sub.IS}

The definition of the \SI-interpolator leads in a natural way to the
notion of an \IS-interpolator, which may be a useful notion to numerical
approximations of integrals by sums. The next notion
requires us to restrict attention to {\em lattice polyhedra}, i.e.,
polyhedra whose vertices are points of the lattice $\La$. 
Below, a {\em lattice cone} $K$ in $V$ is a rational cone such that 
$U \cap \La \neq \emptyset$, where $U$ is the largest affine subspace contained in $K$.

\begin{definition}
\lbl{def.ISinterpolator}
An {\em \IS-interpolator} is map $\l$, from the set of all 
 lattice cones in $V$ to $\calM(V^*)$  such that for any lattice
polyhedron $P$ in $V$ we have:
\begin{equation}
\lbl{eq.ISinter}
I(P)=\sum_{F \in \calF(P)} \l(\Supp(P,F))S(F)
\end{equation}
where the sum is over all faces $F$ of $P$.
An \IS-interpolator is {\em regular} if  $\l$ takes values in 
$\calM^r(V^*)$.
\end{definition}
Hereditary and $\Psi$-compatible \IS-interpolators are defined in an analogous
way to Definition \ref{def.interhereditary} and \ref{def.Psicompatible}.

The statement and proof of Theorem \ref{thm.1} holds with minor modification 
for \IS-interpolators. 

\begin{theorem}
\lbl{thm.4}
\rm{(a)}
If $V$ is a rational vector space and $\Psi$ is a rigid complement map on 
$V^*$, there is a unique $\Psi$-compatible \IS-interpolator $\l^{\Psi}$.
\newline
\rm{(b)} In addition, satisfies properties (2)-(5) of 
Theorem \ref{thm.1} and the following version of additivity:
\begin{enumerate}
\item
(Additivity) If $K$ is a cone that does not contain a linear
subspace and is subdivided into a finite union of
cones $K_i$, then $\l^{\Psi}(K)=\sum_{i:\text{dim}(K_i)=\text{dim}(K)}
\l^{\Psi}(K_i)$.
\end{enumerate}
\end{theorem}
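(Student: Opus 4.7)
The plan is to imitate the proof of Theorem~\ref{thm.1} almost verbatim, interchanging the roles of $S$ and $I$. First I would establish the analogue of Lemma~\ref{lem.interpolator}: a function $\lambda$ satisfies the IS-interpolator equation~\eqref{eq.ISinter} on all lattice polyhedra if and only if it does so on all lattice cones. The proof uses Proposition~\ref{prop.BL}, applied simultaneously to $I$ and $S$, in exactly the same telescoping manner as for SI-interpolators; the restriction to lattice (rather than merely rational) polyhedra is needed precisely because the translation law for $S$ in (A3) requires integer translations.

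Next I would construct $\lambda^{\Psi}$ by induction on $\dim V$. In dimension zero, set $\lambda^{\Psi}(\{0\}) = 1$. For the inductive step, fix a $\Psi$-generic lattice cone $K$. If $K$ contains a line, axiom (A1) gives $I(K) = S(K) = 0$, so set $\lambda^{\Psi}(K) = 0$. Otherwise, when $K$ is pointed with $\dim K = \dim V$, rearrange~\eqref{eq.ISinter} to read
\begin{equation*}
\lambda^{\Psi}(K)\,S(K) \;=\; I(K) \;-\; \sum_{F\subsetneq K} \lambda^{\Psi}(\Supp(K,F))\,S(F),
\end{equation*}
where for each proper face $F$ the cone $\Supp(K,F)$ is pulled back via $V\twoheadrightarrow V/\lin(F)$ from a pointed cone $\bar K_F$ in a lower-dimensional $\Psi$-generic quotient. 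The compatibility diagram~\eqref{eq.comp2} and the projection $\pi$ of Lemma~\ref{lem.cmapextend} determine $\lambda^{\Psi}(\Supp(K,F))$ from $\bar\lambda^{\bar\Psi}(\bar K_F)$, which exists by induction. Dividing by $S(K)$ in $\calM(V^*)$ then defines $\lambda^{\Psi}(K)$, and this choice is forced by~\eqref{eq.ISinter}, giving both existence and uniqueness. The case of a pointed $K$ that is not top-dimensional is handled by the same hereditary step applied to the quotient $V/\lin(\{v\})$ at the vertex.

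Properties (2)--(5) and the modified additivity then follow formally. Lattice invariance~(2) and isometry equivariance~(3) are inherited from the corresponding laws for $S$ and $I$ in (A3) together with the manifest equivariance of the complement-map data. The constant-term comparison~(5) reduces in each dimension to evaluating~\eqref{eq.ISinter} at $\xi = 0$, which yields a volume-in-terms-of-lattice-points identity whose local coefficients are uniquely determined by the same conditions that fix those of~\cite{PT,Mo}. For the modified additivity~(1), note that $I$ is a \emph{simple} valuation whereas $S$ is a full valuation, so a relation $\sum r_i\chi(K_i) = 0$ constrains the left-hand side of~\eqref{eq.ISinter} only via top-dimensional pieces; comparing the codimension filtration on both sides isolates the top-dimensional $\lambda^{\Psi}(K_i)$.

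The main obstacle will be the regularity statement~(4): one must show that the meromorphic quotient
$\bigl(I(K) - \sum_{F \subsetneq K}\lambda^{\Psi}(\Supp(K,F))\,S(F)\bigr)/S(K)$
is regular at $\xi = 0$. Both $I(K)$ and $S(K)$ have simple poles along the same edge-hyperplanes $\la\xi,v_i\ra = 0$ by~(A5), so the quotient is \emph{a priori} a candidate for regularity, but one must verify that all residues match and that no spurious higher-order poles arise along intersections of edge hyperplanes. The residues of the correction terms along each face hyperplane are controlled by the hereditary diagram~\eqref{eq.comp2}, and they cancel the corresponding residues of $I(K)$ precisely when the complement choices on nested faces are consistent. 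This is where the rigidity condition~(b) of Definition~\ref{def.cmap} is essential, and the argument parallels the pole-cancellation analysis of Berline--Vergne used in the SI case of Theorem~\ref{thm.1}.
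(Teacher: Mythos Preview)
Your overall strategy---mirror the proof of Theorem~\ref{thm.1} with the roles of $S$ and $I$ interchanged and note why the restriction to lattice polyhedra is needed---is exactly what the paper does. However, your execution contains a genuine error in the inductive step.

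In the interpolator equation~\eqref{eq.ISinter} applied to a pointed cone $K$ with vertex $v$, the unknown $\lambda^\Psi(K)$ appears as the coefficient of $S(\{v\})$, not of $S(K)$: the face $F=\{v\}$ has $\Supp(K,\{v\})=K$, while the top face $F=K$ has $\Supp(K,K)=\aff(K)$, whose $\lambda$-value is already determined (equal to $1$) by the hereditary structure in the zero-dimensional quotient. So the correct rearrangement is
\[
\lambda^\Psi(K)\,S(\{v\}) \;=\; I(K)\;-\;\sum_{F\neq\{v\}}\lambda^\Psi(\Supp(K,F))\,S(F),
\]
and one solves by dividing by $S(\{v\})$, which equals $e^{\langle\xi,v\rangle}$ precisely when $v\in\Lambda$ and vanishes otherwise. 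This is exactly why the IS theory is restricted to lattice cones, and it is the one point the paper singles out. Your division by $S(K)$ is not the right operation; $S(K)$ has zeros and poles, whereas $e^{\langle\xi,v\rangle}$ is an invertible entire function, so no quotient-regularity issue of the kind you describe ever arises.

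Consequently your regularity discussion is aimed at the wrong target. Once the formula is written correctly, regularity of $\lambda^\Psi(K)$ at $\xi=0$ is the statement that $I(K)-\sum_{F\neq\{v\}}\lambda^\Psi(\Supp(K,F))S(F)$ is regular, and this follows by the same residue cancellation along edge hyperplanes as in the SI case (property~(A6) and the argument of \cite[Prop.~18]{BV}). A smaller point: you cannot set $\lambda^\Psi(K)=0$ for non-pointed $K$; such cones arise as $\Supp(K,F)$ for positive-dimensional $F$, and their values must come from the hereditary equation~\eqref{eq.thm1b} via the quotient $V/V(K)$, as you in fact use two sentences later.
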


One can obtain directly a formula for $\l^{\Psi}$ in terms of the 
interpolator $\mu^{\Psi}$ of Theorem \ref{thm.1} as follows, by observing
that $\mu^{\Psi}(\Supp(K,K))=1$.

\begin{theorem}
\lbl{thm.5}
If $K$ is a positive (resp. zero) dimensional lattice cone then
\begin{equation}
\lbl{eq.mobius}
\sum_{F \in \calF(K)} \l^{\Psi}(F) \mu^{\Psi}(\Supp(K,F))=0,
\qquad \text{(resp.)} \qquad \l^{\Psi}(K) =1.
\end{equation}
\end{theorem}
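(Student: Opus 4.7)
The plan is to realize the claimed identity as a Möbius-type inversion between the SI- and IS-interpolators, obtained by composing their defining equations on a single cone $K$ and then specializing to its minimal face.

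For the zero-dimensional case, $K=\{v\}$ with $v\in\La$ has only itself as a face, and $\Supp(K,K)=\{v\}$. Axioms (A3)-(A5) give $I(\{v\})=S(\{v\})=e^{\la\xi,v\ra}$, so the IS-interpolator equation $I(K)=\l^{\Psi}(\Supp(K,K))S(K)$ forces $\l^{\Psi}(K)=1$.

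For $K$ of positive dimension, I would apply the SI-interpolator to $K$,
$$S(K)=\sum_{F\in\calF(K)}\mu^{\Psi}(\Supp(K,F))\,I(F),$$
then expand each $I(F)$ by the IS-interpolator equation for the cone $F$:
$$I(F)=\sum_{G\in\calF(F)}\l^{\Psi}(\Supp(F,G))\,S(G).$$
Substituting and collecting terms by $S(G)$ gives
$$S(K)=\sum_{G\in\calF(K)}S(G)\Bigl[\,\sum_{F:\,G\le F\le K}\mu^{\Psi}(\Supp(K,F))\,\l^{\Psi}(\Supp(F,G))\Bigr].$$
By the linear independence of the exponential sums $\{S(G)\}$ associated to distinct faces of $K$ --- a consequence of the distinct polar structure encoded in \eqref{eq.A5} and Proposition \ref{prop.BL} --- the bracketed coefficient must equal $\delta_{K,G}$ for every pair $G\le K$. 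The diagonal case $G=K$, combined with the hint $\mu^{\Psi}(\Supp(K,K))=1$, recovers $\l^{\Psi}(\Supp(K,K))=1$.

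Finally, I would specialize the off-diagonal identity ($G<K$) to the minimal face $G=G_{\min}$: the vertex of $K$ if $K$ is pointed, and $\lin(K)$ in general. Since the faces of the pointed quotient $K/\lin(K)$ have trivial lineality, $\lin(F)=\lin(K)$ for every face $F\le K$, so $G_{\min}$ is also the minimal face of each such $F$, and consequently $\Supp(F,G_{\min})=F$ as cones. The inversion identity then collapses to
$$\sum_{F\in\calF(K)}\mu^{\Psi}(\Supp(K,F))\,\l^{\Psi}(F)=0,$$
which is the claim. The main obstacle is the linear-independence step: extracting coefficients requires care, especially for non-pointed $K$ where several $S(G)$ and $I(G)$ vanish identically. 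A cleaner, more robust alternative is to define a candidate $\tilde\l^{\Psi}$ by the recursion $\tilde\l^{\Psi}(K):=-\sum_{F\subsetneq K}\mu^{\Psi}(\Supp(K,F))\,\tilde\l^{\Psi}(F)$ with $\tilde\l^{\Psi}(\{v\}):=1$, verify by induction on $\dim K$ that $\tilde\l^{\Psi}$ satisfies the defining IS-interpolator equation on every lattice polyhedron (using the SI-interpolator equation for $\mu^{\Psi}$ and a reindexing of faces), and then invoke the uniqueness part of Theorem \ref{thm.4} to identify $\tilde\l^{\Psi}=\l^{\Psi}$.
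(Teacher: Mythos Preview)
Your substitution of the \IS-equation into the \SI-equation and your specialization to the minimal face $G_{\min}$ (so that $\Supp(F,G_{\min})=F$) are exactly the moves the paper makes. The divergence is in how to extract the coefficient identity from
\[
S(K)=\sum_{G\in\calF(K)}\Bigl[\sum_{F:\,G\le F\le K}\mu^{\Psi}(\Supp(K,F))\,\l^{\Psi}(\Supp(F,G))\Bigr]S(G).
\]
You propose linear independence of the $S(G)$, and you correctly flag this as shaky: for non-pointed $K$ every $S(G)$ vanishes, and even for pointed $K$ the argument would need a careful pole analysis. The paper sidesteps this entirely by observing that the bracketed expression, viewed as a function of $\Supp(K,G)$, is itself a $\Psi$-compatible $SS$-interpolator (i.e., it satisfies the analogue of \eqref{eq.inter} with $S$ on both sides). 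The uniqueness argument of Theorem~\ref{thm.1} applies verbatim to $SS$-interpolators, and the unique one is manifestly $\delta_{K,G}$. This gives the coefficient identity directly, with no appeal to independence of the $S(G)$.

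Your alternative---defining $\tilde\l^{\Psi}$ by the M\"obius recursion and invoking the uniqueness in Theorem~\ref{thm.4}---would also work, but it is a different route: it requires an inductive verification that $\tilde\l^{\Psi}$ really satisfies the \IS-interpolator equation on every lattice polyhedron. The paper's $SS$-interpolator trick avoids that induction, since the interpolator property of $\l\circ\mu$ is immediate from the interpolator properties of $\l$ and $\mu$ separately.
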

The interpolator $\l^{\Psi}$ gives a reverse Euler-MacLaurin summation
formula. Fix a rigid complement map $\Psi$ on $V$, 
a $\Psi$-generic lattice polyhedron $P$ and a face $F$ of $P$. 
Consider the Taylor series expansion of $\l^{\Psi}(\Supp(P,F))(\xi)$ and 
the corresponding differential operator (of infinite order, with constant 
coefficients):

\begin{equation}
\lbl{eq.ISDpf}
\Delta^{\Psi}(P,F)=\l^{\Psi}(\Supp(P,F))(\pt_x)
\end{equation}

\begin{theorem}
\lbl{thm.22}
With the above assumptions, for every lattice polytope $P$ and every
polynomial function $h(x)$ on $V$ we have
\begin{equation}
\lbl{eq.EM22}
\int_P  h(x)=\sum_{F \in \calF(P)} \sum_{x \in F \cap \La}
\left(\Delta^{\Psi}(P,F) \cdot h\right)(x).
\end{equation}
In particular, if $h(x)=1$ then
\begin{equation}
\lbl{eq.EML2}
\vol(P)=\sum_{F \in \calF(P)} \l^{\Psi}(\Supp(P,F))(0) \#(F)
\end{equation}
where $\#(F)$ denotes the number of lattice points of $F$.
\end{theorem}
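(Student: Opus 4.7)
The plan is to derive \eqref{eq.EM22} from the defining \IS-interpolator identity \eqref{eq.ISinter} by differentiating at the origin, in direct analogy with how Theorem \ref{thm.2} follows from \eqref{eq.inter}. The setup is favorable: since $P$ is a compact lattice polytope, every face $F$ is also a compact lattice polytope, so $I(P)(\xi)$ is entire, each $S(F)(\xi) = \sum_{x \in F \cap \La} e^{\la \xi, x\ra}$ is a finite sum of exponentials (hence entire), and by property (4) of Theorem \ref{thm.1} (carried over to $\l^\Psi$ via Theorem \ref{thm.4}(b)) each $\l^\Psi(\Supp(P,F))$ is regular at $\xi = 0$. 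In particular, every term in \eqref{eq.ISinter} is analytic at $\xi = 0$ and admits a convergent Taylor expansion there.

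The starting identity is $h(x) = h(\pt_\xi)\, e^{\la \xi, x\ra}\big|_{\xi=0}$, valid for any polynomial $h$, which transforms a polynomial integral into a differential operator applied to an exponential integral:
\[
\int_P h(x)\, dx \;=\; h(\pt_\xi)\, I(P)(\xi)\big|_{\xi=0}.
\]
Substituting \eqref{eq.ISinter} on the right-hand side and exchanging $h(\pt_\xi)|_{\xi=0}$ with the finite sums over faces of $P$ and over lattice points in each face, the proof reduces to the face-level identity
\[
h(\pt_\xi)\bigl[\l^{\Psi}(\Supp(P,F))(\xi)\,e^{\la \xi, x\ra}\bigr]\big|_{\xi=0} \;=\; (\Delta^{\Psi}(P,F)\cdot h)(x)
\]
for each $x \in F \cap \La$.

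To establish this face-level identity I would use the symbol calculation $\l^{\Psi}(\Supp(P,F))(\xi)\, e^{\la \xi, x\ra} = \l^{\Psi}(\Supp(P,F))(\pt_x)\, e^{\la \xi, x\ra}$, which follows from $\pt_{x_i} e^{\la \xi, x\ra} = \xi_i\, e^{\la \xi, x\ra}$ applied term-by-term to the Taylor series of $\l^\Psi(\Supp(P,F))$ at $0$. Since $\pt_\xi$ and $\pt_x$ commute, one may then move $h(\pt_\xi)$ past $\l^\Psi(\Supp(P,F))(\pt_x)$ and use $h(\pt_\xi) e^{\la \xi, x\ra}|_{\xi=0} = h(x)$ to collapse everything to $(\Delta^{\Psi}(P,F)\cdot h)(x)$. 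The specialization $h \equiv 1$ then yields \eqref{eq.EML2} immediately, since $\Delta^{\Psi}(P,F) \cdot 1 = \l^\Psi(\Supp(P,F))(0)$ and $\sum_{x \in F \cap \La} 1 = \#(F)$. The only apparent subtlety is the infinite order of $\Delta^{\Psi}(P,F)$, but because $h$ is a polynomial the operator $h(\pt_\xi)$ has finite order, and only finitely many Taylor coefficients of $\l^\Psi(\Supp(P,F))$ at the origin contribute to either side; thus every interchange of sum, derivative, and integral in the argument reduces to a finite operation and is elementary to justify.
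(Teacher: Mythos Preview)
Your proposal is correct and follows exactly the approach the paper intends: the paper does not give a separate proof of Theorem~\ref{thm.22}, but the argument is the direct analogue of the proof of Theorem~\ref{thm.2} in Section~\ref{sub.thm2}, with the roles of $S$ and $I$ interchanged and \eqref{eq.ISinter} playing the role of \eqref{eq.inter}. Your use of $h(\pt_\xi)e^{\la\xi,x\ra}=h(x)e^{\la\xi,x\ra}$, the symbol identity $\l^{\Psi}(\Supp(P,F))(\xi)e^{\la\xi,x\ra}=\l^{\Psi}(\Supp(P,F))(\pt_x)e^{\la\xi,x\ra}$, and evaluation at $\xi=0$ mirrors that proof line for line.
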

Equation \eqref{eq.EML2} computes the volume of a lattice polytope 
as a weighted sum of the number of lattice points of its faces. A formula
of this type was first written down by Morelli, \cite[Eqn.(5)]{Mo}.

\subsection{\ISo-interpolators and Morelli's work}
\lbl{sub.morelli}

When $\Psi$ comes from a complete flag, Equations \eqref{eq.EML} and 
\eqref{eq.EML2} are similar with some results \cite[Eqn.(5),Eqn.(6)]{Mo} 
of Morelli. To explain this, let us introduce the variant $S^0$ of the 
exponential sum function defined by:

\begin{equation}
\lbl{eq.S0}
S^0(P)=S(P^0)
\end{equation}
where $P^0$ denotes the (relative) interior of a polyhedron $P$. The set $P^0$ is not a 
polyhedron itself, however it is a virtual sum of polyhedra.
Then, we can talk about $\IS^0$-interpolators $\nu$ on {\em lattice} cones. 
Theorems \ref{thm.4} and \ref{thm.5} have the following analogue.

\begin{theorem}
\lbl{thm.6}
\rm{(a)}
If $V$ is a rational vector space and $\Psi$ is a rigid complement map on 
$V^*$, there is a unique $\Psi$-compatible \ISo-interpolator 
$\nu^{\Psi}$. In addition, 
\newline
\rm{(b)} (Additivity) If $K$ is a cone that does not contain a linear
subspace and is subdivided into a finite union of
cones $K_i$, then $\nu^{\Psi}(K)=\sum_{i:\text{dim}(K_i)=\text{dim}(K)}
\nu^{\Psi}(K_i)$.
\newline
\rm{(c)} $\nu^{\Psi}$ satisfies properties (2)-(5) of Theorem \ref{thm.1}.
\newline
\rm{(d)} If $K$ is a lattice cone then
\begin{equation}
\lbl{eq.mobiusIS0}
\sum_{F \in \calF(K)} \nu^{\Psi}(F) \mu^{\Psi}(\Supp(K,F))=1.
\end{equation}
\end{theorem}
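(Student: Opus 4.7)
The plan is to follow the pattern of Theorems \ref{thm.4} and \ref{thm.5}: establish parts (a)--(c) by close analogy with the $S$-version, and prove (d) by composing the \SI- and \ISo-interpolators and doing a M\"obius inversion in the face poset.

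For (a), (b), (c), I would first check that $S^0$ enjoys the structural properties needed for the construction of $\lambda^\Psi$ in Theorem \ref{thm.4}. Via the inclusion-exclusion identity $\chi(P^0) = \sum_{F \leq P} (-1)^{\dim P - \dim F} \chi(F)$ on the face poset, one sees that $S^0$ is a simple valuation; moreover $S^0$ vanishes on polyhedra containing a line, satisfies $S^0(s + P)(\xi) = e^{\la\xi, s\ra} S^0(P)(\xi)$ for $s \in \La$, and on its convergence domain agrees with $\sum_{x \in P^0 \cap \La} e^{\la\xi, x\ra}$. With these in hand, the complement-map construction underlying Theorem \ref{thm.4} applies verbatim with $S^0$ replacing $S$, producing the unique $\Psi$-compatible \ISo-interpolator $\nu^\Psi$, the additivity (b), and the invariance/equivariance/regularity properties (c) inherited from $\mu^\Psi$ and $S^0$.

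For (d), fix a lattice cone $K$. Applying the \SI-interpolator of Theorem \ref{thm.1} and then the just-constructed \ISo-interpolator, and interchanging sums, yields
\begin{equation*}
S(K) \;=\; \sum_{G \in \calF(K)} S^0(G) \Bigl( \sum_{F:\, G \leq F \leq K} \mu^\Psi(\Supp(K,F))\, \nu^\Psi(\Supp(F, G)) \Bigr).
\end{equation*}
On the other hand, the disjoint decomposition $K = \bigsqcup_{G \in \calF(K)} G^0$ gives the tautology $S(K) = \sum_{G \in \calF(K)} S^0(G)$. Using the bijection between faces of $K$ containing $G$ and faces of $\Supp(K, G)$, together with the iteration identity $\Supp(\Supp(K, G), \Supp(F, G)) = \Supp(K, F)$, the inner sum rewrites as $\sum_{\widetilde F \in \calF(\Supp(K,G))} \mu^\Psi(\Supp(\Supp(K,G), \widetilde F))\, \nu^\Psi(\widetilde F)$. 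So proving (d) for all lattice cones reduces to showing this inner sum equals $1$ for every $K$ and every face $G \leq K$.

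The main obstacle is the coefficient extraction: passing from the equality of two $S^0$-expansions to a term-by-term equality of coefficients. I would handle this by induction on $\dim K$. The base case $\dim K = 0$ reads $\nu^\Psi(\{0\})\, \mu^\Psi(\{0\}) = 1$, which holds trivially. For the inductive step I use that $\mu^\Psi(\Supp(K,K)) = \mu^\Psi(\lin(K)) = 1$, a consequence of the compatibility diagram \eqref{eq.comp2} applied to the quotient $V \longto V/\lin(K)$; this lets one solve (d) recursively for $\nu^\Psi(K)$ in terms of its values on proper faces. Equivalently and cleanly, rewrite (d) as $\nu^\Psi(K) = 1 - \sum_{F \subsetneq K} \nu^\Psi(F)\, \mu^\Psi(\Supp(K, F))$ to \emph{define} a function $\widetilde\nu$ on lattice cones, verify by reversing the computation above that $\widetilde\nu$ satisfies the \ISo-interpolator equation, and conclude $\widetilde\nu = \nu^\Psi$ by the uniqueness established in the first paragraph.
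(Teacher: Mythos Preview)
Your approach is essentially the paper's: parts (a)--(c) are obtained from Theorem \ref{thm.4} with $S^0$ in place of $S$, and (d) comes from composing $\mu^\Psi$ with $\nu^\Psi$ and comparing against the tautology $S(K)=\sum_G S^0(G)$. The paper phrases the concluding step more cleanly than you do: it observes that both the composition $\nu^\Psi\circ\mu^\Psi$ and the constant map $K\mapsto 1$ are $\Psi$-compatible $SS^0$-interpolators, and then invokes \emph{uniqueness} of such interpolators (proved exactly as in Theorem \ref{thm.1}). This immediately gives $(\nu^\Psi\circ\mu^\Psi)(K)=1$, which is \eqref{eq.mobiusIS0}, with no ``coefficient extraction'' issue to worry about.

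Your induction sketch can be made to work and is in fact the unwound version of that uniqueness argument, but as written it is a little muddled: the sentence ``this lets one solve (d) recursively for $\nu^\Psi(K)$'' is misleading, since $\nu^\Psi(K)$ is already defined and you are trying to \emph{verify} the identity, not solve for anything. The clean inductive step is: for faces $G$ of positive dimension the inner sum is \eqref{eq.mobiusIS0} for the lower-dimensional cone $\overline{\Supp(K,G)}$ and equals $1$ by induction; subtracting those terms from both $S^0$-expansions leaves $[\text{inner sum}(K,\{v\})]\,S^0(\{v\})=S^0(\{v\})$, and since $S^0(\{v\})=e^{\langle\xi,v\rangle}\neq 0$ you conclude. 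Your alternative route---define $\widetilde\nu$ by the recursion and then ``reverse the computation'' to check it is an \ISo-interpolator---is not a simple reversal: it implicitly requires inverting $\mu^\Psi$ in the incidence-type convolution algebra (equivalently, passing through the \IS-interpolator $\lambda^\Psi$), which is doable but is more than you wrote. Recasting the whole thing as ``uniqueness of $\Psi$-compatible $SS^0$-interpolators,'' as the paper does, avoids both detours.
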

Fix a lattice polytope $P$  and a face $F$ of $P$. If 
$\Psi$ comes from a complete flag $L$ in $V^*$ as in Lemma 
\ref{lem.examplesPsi}, and we vary the flag, we obtain two rational 
functions $\nu(\Supp(P,F))$ and 
$\mu(\Supp(P,F))$ on $\Fl(V^*) \times V^*$ which are regular at 
a Zariski open subset of $\Fl(V^*) \times 0$.
Thus, we can consider the constant terms $\nu^{\Psi}(\Supp(P,F))(0)$ and 
$\mu^{\Psi}(\Supp(P,F))(0)$ which are rational functions on $\Fl(V^*)$. 

On the other hand, Morelli constructs a pair of rational functions 
$\nu_k(\vartheta_F P)$ and $\mu_k(\vartheta_F P)$ (where $k$ is the dimension 
of $\lin(\Supp(P,F))$) that appear respectively in Equations (5) and (6) of 
\cite{Mo} (unfortunately, Morelli denotes the two functions with the same
notation, although he clearly recognizes that they are distinct functions). 
Morelli's construction uses K-theory, localization and the Bott residue 
theorem.

The following proposition is a re-expression of the flag case of Theorem 
\ref{thm.1}, part (5).
\begin{proposition}
\lbl{prop.mor1}
With the above notations, for polytopes $P$ of dimension at most 2, we have:
\begin{equation}
\lbl{eq.munu}
\nu(\Supp(P,F))(0)=\nu_k(\vartheta_F P), \qquad
\mu(\Supp(P,F))(0)=\mu_k(\vartheta_F P).
\end{equation}
\end{proposition}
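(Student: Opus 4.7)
The plan is to derive both identities in \eqref{eq.munu} from results already in hand, rather than attempting any direct comparison between our construction of $\mu^{\Psi},\nu^{\Psi}$ and Morelli's K-theoretic construction. The argument naturally splits into two pieces: the $\mu$-identity is a direct restatement of Theorem~\ref{thm.1}(5), while the $\nu$-identity is deduced from it via the M\"obius-type relation of Theorem~\ref{thm.6}(d) and the analogous relation on Morelli's side.

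First, for the $\mu$ half of \eqref{eq.munu}, I would simply unwind the definitions. Letting $\Psi = \Psi^L$ for a complete flag $L\in\Fl(V^*)$ and letting $L$ vary, the constant term $\mu^{\Psi^L}(\Supp(P,F))(0)$ becomes a rational function on $\Fl(V^*)$ attached to the supporting cone $\Supp(P,F)$. Morelli's $\mu_k(\vartheta_F P)$ is, by construction, a rational function on $\Fl(V^*)$ attached to the same cone, viewed in the quotient $V/\lin(F)$ where it becomes a pointed cone of dimension $k=\dim\lin(\Supp(P,F))$. Under the hypothesis $\dim P\le 2$ we have $k\le 2$, so Theorem~\ref{thm.1}(5) applies and gives precisely this equality of rational functions.

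Second, for the $\nu$ half of \eqref{eq.munu}, I would use the M\"obius identity \eqref{eq.mobiusIS0} of Theorem~\ref{thm.6}(d) evaluated at $\xi=0$: for every lattice cone $K$,
\begin{equation*}
\sum_{F \in \calF(K)} \nu^{\Psi}(F)(0)\, \mu^{\Psi}(\Supp(K,F))(0) = 1.
\end{equation*}
Since $\mu^{\Psi}(\Supp(K,K))(0) = 1$ by the second clause of Theorem~\ref{thm.5} (which carries over to the \ISo-setting), this formula is triangular in the dimension of $F$ and determines $\nu^{\Psi}(K)(0)$ uniquely from the $\mu$-constant terms on $\Supp(K,F)$ together with the $\nu$-constant terms on proper faces $F\subsetneq K$, starting from $\nu^{\Psi}(\{0\})(0)=1$. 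An analogous M\"obius-type identity is satisfied by Morelli's $\mu_k,\nu_k$: indeed, comparing his Equations~(5) and (6) on a polytope and its faces and using the interior/boundary decomposition (equivalently, Ehrhart reciprocity applied face by face) shows that his functions obey the same triangular relation. Given the first step, the induction on dimension (up to $2$) then forces $\nu(\Supp(P,F))(0) = \nu_k(\vartheta_F P)$.

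The main obstacle is extracting the precise M\"obius-type relation among Morelli's functions from \cite{Mo}. Since Morelli denotes his two rational functions by the same symbol and treats them on separate Grassmannians dimension-by-dimension, some bookkeeping is required to confirm that his $\mu_k$ and $\nu_k$ really do fit into the same inversion identity as our $\mu^{\Psi}$ and $\nu^{\Psi}$. Once this dictionary is established, the proof is purely formal: it is the uniqueness of solutions to a triangular linear system, together with the already-proved matching of $\mu$-values from Theorem~\ref{thm.1}(5).
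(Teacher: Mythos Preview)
Your treatment of the $\mu$-identity is exactly what the paper does: the paper states explicitly that Proposition~\ref{prop.mor1} ``is a re-expression of the flag case of Theorem~\ref{thm.1}, part~(5),'' and offers no further argument. So for that half your proposal and the paper coincide.

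For the $\nu$-identity you take a genuinely different route. The paper treats the $\nu$-half as equally immediate, implicitly invoking Theorem~\ref{thm.6}(c), which asserts that $\nu^{\Psi}$ satisfies the analogue of property~(5) (with the proof of that analogue, like the proof of Theorem~\ref{thm.1}(5) itself, carried out by the explicit low-dimensional computations in Section~\ref{sec.examples} and Section~\ref{sub.computenu}). You instead propose to deduce the $\nu$-identity from the $\mu$-identity via the M\"obius relation~\eqref{eq.mobiusIS0} together with an analogous relation for Morelli's $\mu_k,\nu_k$, and then invoke uniqueness of the triangular system. This is a more structural argument than the paper's and, if it goes through, would explain \emph{why} the two constructions agree rather than merely verifying it by computation.

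The gap you yourself flag is real, however: you need the M\"obius relation $\sum_F \nu_k(F)\,\mu_k(\Supp(K,F))=1$ to hold on Morelli's side, and you have not extracted this from \cite{Mo}. Morelli's Equations~(5) and~(6) are separate statements (a volume formula using interior lattice points and a lattice-point formula using volumes), and your appeal to ``Ehrhart reciprocity applied face by face'' is a sketch, not a proof. Until that identity is verified in Morelli's setting, your induction cannot start. The paper sidesteps this by direct computation in dimensions $\le 2$; if you want your inversion argument to stand on its own, you must either locate the relevant identity in \cite{Mo} or supply a proof that Morelli's coefficients satisfy it.
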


In \cite[p.191]{Mo} and also in \cite[Thm.5]{Mo}, Morelli observes that 

\begin{equation}
\lbl{eq.mu=nu}
\mu(K)(0)=\nu(K^{\vee})(0)
\end{equation}
for lattice cones $K$ of dimension 4 or less, and that Equation
\eqref{eq.mu=nu} fails in dimensions more than 4.
This mysterious and seemingly deep coincidence is easy to explain from our point of 
view, for cones of dimension at most 2. The rational functions $\nu(K)$ 
and $\mu(K^{\vee})$ are distinct, 
in fact they are functions on distinct spaces ($\Fl(V^*) \times V^*$ and 
$\Fl(V) \times V$, respectively).
 Their constant terms coincidentally agree in small dimensions.
For a discussion, see Section \ref{sub.mu=nu}.

\section{Proofs}
\lbl{sec.proofs}

\subsection{Proof of Theorem \ref{thm.1}}
\lbl{sub.thm1}

Fix a rigid complement map $\Psi$ on a rational vector space $V$ with
lattice $\La$, and consider a $\Psi$-compatible interpolator $\mu^{\Psi}$
and a $\Psi$-generic rational polytope $P$. We will show by induction 
on the dimension of $P$ the
existence and uniqueness of $\mu^{\Psi}(P)$. All cones in this section 
will be assumed to be $\Psi$-generic. By Lemma \ref{lem.interpolator} 
it suffices to assume that $P=K$ is a cone in $V$. To avoid confusion,
we denote the map $\mu: \calC^{\Psi}(V) \longto \calM(V^*)$ by $\mu_V$
and for every $\Psi$-generic quotient $W$ of $V$, we denote the map
$\bar{\mu}:\calC^{\bar\Psi}(W) \longto \calM(W^*)$ by $\mu_W$.

If $K$ is pointed with vertex $v$, then we can single out the contribution 
from the 
$0$-dimensional face of $K$ in Equation \eqref{eq.inter}, and together with 
hereditary property of Equation \eqref{eq.comp1}, it follows that

\begin{equation}
\lbl{eq.thm1a}
\mu^{\Psi}_V(K)(\xi)=e^{-\la \xi,v\ra}\left(\sum_{x \in K \cap \La} 
e^{\la \xi,x\ra}-\sum_{F, \text{dim}(F)>0} 
\mu^{\bar\Psi}_{V/\lin(F)}(\overline{\Supp(K,F)})(\pi(\xi)) \int_F e^{\la \xi,x\ra} dm_F
(x) \right)
\end{equation}
where $\overline{\Supp(K,F)}$ denotes the image of $\Supp(P,F)$ in the quotient
space $V/\lin(F)$, and  $\pi: V^* \longto (V/\lin(F))^*$ is the projection 
using the decomposition $V^*=(V/\lin(F))^* \oplus \Psi((V/\lin(F))^*)$
given by the rigid complement map $\Psi$

On the other hand, if $K$ is a non-pointed cone in $V$, let $V(K)$ denote the linear subspace
of $V$ parallel to the largest affine subspace contained in $K$, and consider the
image $\bar K$ of $K$ in $V/V(K)$, which is a pointed cone of dimension
strictly less than the dimension of $K$.
Equations \eqref{eq.comp1} and \eqref{eq.comp2} imply that 
\begin{equation}
\lbl{eq.thm1b}
\mu^{\Psi}_V(K)(\xi)=\mu^{\bar\Psi}_{V/V(K)}(\bar K)(\bar \xi)
\end{equation}
where $\xi \in V^*$, $\pi: V^*  \longto (V/V(K))^*$ is the projection 
using the decomposition $V^*=(V/V(K))^* \oplus \Psi((V/V(K))^*)$
given by the rigid complement map $\Psi$ and 
$\bar \xi \in (V/V(K))^*$ is the image of $\xi$ under the above projection.

Equations \eqref{eq.thm1a} and \eqref{eq.thm1b} uniquely define $\mu^{\Psi}_V$ 
from $\mu^{\Psi}_W$ for $\text{dim}(W)< \text{dim}(V)$. On the other hand,
when $V$ is a $0$-dimensional space, we have $\mu^{\Psi}_V(\{0\})=1$. 
This uniquely determines $\mu^{\Psi}_V$. An explicit computation of 
$\mu^{\Psi}(K)$ for cones $K$ of dimension $0$, $1$ and $2$ is given in 
Section \ref{sec.examples}.

Additivity of $\mu^{\Psi}$ follows from the above inductive definition of 
$\mu^{\Psi}$ and the additivity (property (A2)) of the exponential sum and 
exponential integrals. This is presented in detail in the proof of 
\cite[Prop.15]{BV}. 

\begin{remark}
\lbl{rem.TPF}
In fact, Berline-Vergne in \cite{BV} work with the {\em transverse cone} 
$\T(K,F)$ of a face $F$ of $K$ defined to be the image of $\Supp(K,F)$ under 
the projection map $V \longto V/\lin(F)$. The transverse cone $\T(K,F)$ is always
a pointed cone. In fact this gives a 1-1 correspondence

\begin{equation}
\lbl{eq.V2V}
\{ \text{cones in } V \} \leftrightarrow
\{ \text{pointed cones in quotients of } V \}
\end{equation}
Using this correspondence, and an inner product $Q$ on $V$,
Berline-Vergne inductively construct $\mu^{Q}$ defined on the set of
pointed cones on the quotients of $V$. For visual reasons, we prefer
to work with the supporting cones $\Supp(K,F)$ rather than the transverse
cones $\T(K,F)$.
\end{remark}

Going back to the proof of Theorem \ref{thm.1}, 
the lattice invariance and the isometry equivariance of $\mu^{\Psi}$ follows 
easily by induction and the invariance of the $I$ and $S$ functions.

The regularity of the meromorphic function $\mu^{\Psi}(K)$ at zero follows 
by induction and a residue calculation, discussed in detail in 
\cite[Prop.18]{BV}, using the following 
fact about the exponential sum and integral functions.

\begin{itemize}
\item[(A6)] If $K$ is a cone in $V$ with primitive integral generators
$v_1,\dots,v_k$, then  
\begin{equation}
\lbl{eq.A5a}
\prod_{i=1}^k \la \xi,v_i\ra S(K)(\xi),
\qquad
\prod_{i=1}^k \la \xi,v_i\ra I(K)(\xi)
\end{equation}
are regular functions and the residues of the meromorphic functions $S(K)$ 
and $I(K)$ along the hyperplane $v_1=0$ satisfy the equations
\begin{equation}
\lbl{eq.A5b}
\Res_{v_1}(S(K))=-S(\bar K), \qquad
\Res_{v_1}(I(K))=-I(\bar K).
\end{equation}
\end{itemize}
(A6) follows in turn by additivity and Equation \eqref{eq.A5} which evaluates
the $I$ and $S$ functions on simplicial cones.

We  defer the proof of Property (5) of Theorem \ref{thm.1}  until the 
example section (Section \ref{sec.examples}).
See the proofs immediately following Examples \ref{ex.1} and \ref{ex.2}.

Finally property (6) of Theorem \ref{thm.1} follows directly from
\cite[En.4]{BV}.

\subsection{Proof of Theorem \ref{thm.2}}
\lbl{sub.thm2}

Fix a polynomial function $h(x)$ on $V$ and a polytope $P$ on $V$.
Consider the constant coefficient differential operator $h(\pt_\xi)$ which
acts on the function $\xi \mapsto e^{\la \xi, x\ra}$ by:

\begin{equation}
\lbl{eq.Dacts}
 h(\pt_\xi) e^{\la \xi, x\ra} = h(x) e^{\la \xi, x\ra}.
\end{equation}
The definition \eqref{eq.A4} of $S(P)$ and the above gives that

$$
h(\pt_\xi) S(P)=\sum_{x \in P \cap \La} h(x) e^{\la \xi, x\ra}
$$
Let $\text{ev}$ denote the evaluation at $\xi=0$. It follows that

\begin{equation}
\lbl{eq.thm2a}
(\text{ev} \circ h(\pt_\xi)) S(P)
=\sum_{x \in P \cap \La} h(x).
\end{equation}
On the other hand, Theorem \ref{thm.1} gives that:

\begin{eqnarray*}
S(P)(\xi) &=& \sum_{F \in \calF(P)} \mu^{\Psi}(\T(P,F))(\xi) I(F)(\xi)\\
&=& \sum_{F \in \calF(P)} \int_F \mu^{\Psi}(\T(P,F))(\xi) 
e^{\la \xi, x\ra} dm_P(x) \\
&=& \sum_{F \in \calF(P)} \int_F \mu^{\Psi}(\T(P,F))(\pt_x) 
e^{\la \xi, x\ra} dm_P(x) \\
&=& \sum_{F \in \calF(P)} \int_F D^{\Psi}(P,F) \,
e^{\la \xi, x\ra} dm_P(x)
\end{eqnarray*}
where the last equality follows from the definition \eqref{eq.Dpf} of
the differential operator $D^{\Psi}(P,F)$. Applying the differential
operator $h(\pt_{\xi})$ to both sides it follows that

$$
h(\pt_\xi) I(P)(\xi)=\sum_{F \in \calF(P)} \int_F D^{\Psi}(P,F) \, h(x)
e^{\la \xi, x\ra} dm_P(x).
$$
Evaluating at $\xi=0$, it follows that

\begin{equation}
\lbl{eq.thm2b}
(\text{ev} \circ h(\pt_\xi)) S(P)
= \sum_{F \in \calF(P)} \int_F D^{\Psi}(P,F) \cdot h.
\end{equation}
Equations \eqref{eq.thm2a} and \eqref{eq.thm2b} complete 
the proof of Theorem \ref{thm.2}.

\subsection{Proof of Theorem \ref{thm.4}}
\lbl{sub.thm4}

The proof of Theorem \ref{thm.1} applies verbatim, with the following 
observation, which explains the need for lattice polyhedra.

If $K$ is a rational pointed cone with vertex $v$ for which \eqref{eq.ISinter} 
applies, then one of the terms in \eqref{eq.ISinter} is $F=\{v\}$.
In that case, $\T(K,\{v\})=K$ and the corresponding term in 
\eqref{eq.ISinter} is $\nu(K)S(\{v\})$. Now, (A3) and (A5) imply
that

$$
S(\{v\})(\xi)=\begin{cases} e^{\la \xi, v \ra} & \text{if} \, v \in \La \\
0 & \text{otherwise.}
\end{cases}
$$ 
Thus, when $K$ is not a lattice cone, then we cannot solve for $\nu(K)$.
In the case of \SI-interpolators, as discussed in Section 
\ref{sub.cmap}, the corresponding term of \eqref{eq.inter} was 
$\mu(K)I(\{v\})$ and (A3) and (A5) imply that

$$
I(\{v\})(\xi)= e^{\la \xi, v \ra}.
$$

\subsection{Proof of Theorems \ref{thm.5} and \ref{thm.6}}
\lbl{sub.thm5}

Theorem \ref{thm.5} follows from the fact that the composition of an 
\SI-interpolator with an \IS-interpolator is an \rm{SS}-interpolator,
and the fact that an \rm{SS}-interpolator is unique. 
More precisely, fix an \SI-interpolator $\mu$ and an \IS-interpolator $\l$, 
and consider a 
lattice cone $K$. Then, on the one hand we have
$$
S(K)=\sum_{F' \in \calF(K)} \mu(\Supp(K,F'))I(F').
$$
On the other hand for every face $F'$ of $K$, we have
$$
I(F')=\sum_{F \in \calF(F')} \l(\Supp(F',F))S(F)
$$
Substituting the second equation into the first, it follows that
$$
S(K)=\sum_{F, F': \, F \subset F'} 
\mu(\Supp(K,F')) \l(\Supp(F',F)) S(F).
$$
This motivates the following definition. Consider the function
$$
\Supp(P,F) \mapsto (\l \circ \mu) (\Supp(P,F)):=\sum_{F': \,  F \subset F'} 
\mu(\Supp(K,F')) \l(\Supp(F',F)).
$$
It is easy to see that $\l \circ \mu$ is an $SS$-interpolator. Moreover,
if $\mu$ and $\l$ are $\Psi$-compatible, so is their composition.
On the other hand, there is a unique $\Psi$-compatible $SS$-interpolator.
Thus $(\l \circ \mu)(\Supp(P,F))=0 \, \text{resp.} \, 1$ 
for $F \neq P$ (resp. $F=P$). This concludes the proof of Theorem 
\ref{thm.5}. 

Theorem \ref{thm.6} follows by an analogous computation, using the fact that
the map $(\nu \circ \mu)$ is a $\Psi$-compatible $SS^0$-intepolator, 
the uniqueness of such interpolators, 
and the fact that the map that sends every cone to $1$ is a 
$\Psi$-compatible $SS^0$-intepolator.

\section{Computations and Examples}
\lbl{sec.examples}

\subsection{Computation of $\mu^{\Psi}$ for low dimensional cones}
\lbl{sub.computemu}

As was mentioned earlier, the meromorphic functions $\mu^{\Psi}(K)$ are 
effectively computable given a complement map $\Psi$. In this section
we illustrate this, by explicitly computing $\mu^{\Psi}(K)$ for cones $K$ of
dimension at most $2$.

\begin{proposition}
\lbl{prop.mu0}
Suppose that $K=\{0\}$ in the vector space  $V=\{0\}$.  Then  $\mu(K)$ is 
the constant function 1, independent of $\Psi$.  
\end{proposition}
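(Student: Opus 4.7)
The plan is to verify the proposition directly from the defining interpolator equation \eqref{eq.inter}, as this is really the base case of the inductive construction of $\mu^{\Psi}$ described in the proof of Theorem \ref{thm.1}.

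First I would observe that when $V = \{0\}$, the dual space $V^*$ is also trivial, so $\calM(V^*) = \BC$ and the map $\mu$ takes values in scalars. The only polyhedron in $V$ is $K = \{0\}$ itself, and its only face is $F = K$, with $\Supp(K,K) = \{0\}$. Thus the interpolator equation collapses to the single identity
$$
S(\{0\}) \;=\; \mu(\{0\})\cdot I(\{0\}).
$$

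Next I would evaluate $S(\{0\})$ and $I(\{0\})$ from property (A5) applied with $k=0$, where the defining formulas become empty products (equal to $1$) multiplied by $\vol(\Box)=1$ and the sum over $\Box \cap \La = \{0\}$. Equivalently, one can use (A4) directly: the sum and integral each pick up the single value $e^{\la \xi, 0\ra}=1$. Either way, $S(\{0\}) = I(\{0\}) = 1$, whence $\mu(\{0\})=1$.

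Finally, I would note that since the space $V^* = \{0\}$ has only one linear subspace, any rigid complement map $\Psi$ restricted to this trivial situation is the same, and $\{0\}$ is $\Psi$-generic for every $\Psi$. Hence the value $\mu^{\Psi}(\{0\}) = 1$ is forced independently of the choice of complement map. There is no real obstacle here; the statement is essentially the initialization step that launches the inductive construction of $\mu^{\Psi}$ in Equations \eqref{eq.thm1a} and \eqref{eq.thm1b}.
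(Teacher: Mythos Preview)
Your proof is correct and follows essentially the same approach as the paper: the paper's proof is simply ``$S(K)=I(K)=1$ forces $\mu(K)=1$,'' and your argument is an expanded version of exactly this, spelling out why the interpolator equation reduces to $S(\{0\})=\mu(\{0\})I(\{0\})$ and why both $S$ and $I$ evaluate to $1$.
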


\begin{proof}
$S(K)=I(K)=1$ forces $\mu(K)=1$.
\end{proof}

If $\{v_1,\dots,v_k\}$ is a collection of vectors in $V$, let 
$
(v_1,\dots,v_k)=\BQ^+ v_1+ \dots + \BQ^+ v_k
$
denote the cone spanned by those vectors, where $\BQ^+$ is the set of 
non-negative rational numbers.

\begin{proposition} 
\lbl{prop.mu1}
Now suppose that $V$ is one-dimensional and  $K= \Cone(v)$ is a ray in $V$ 
generated by a primitive vector $v\in\Lambda$. Then, independent of $\Psi$,
 we have
$$
\mu(K)(\xi)=B(\langle \xi, v \rangle),
$$
where 
\begin{equation}
\lbl{eq.Ber}
B(z)=\frac{1}{1-e^z} + \frac{1}{z}=\frac{1}{2}-\frac{z}{12}+\frac{z^3}{720}
-\frac{z^5}{30240}+ \dots
\end{equation}
is the generating series of the Bernoulli numbers.
\end{proposition}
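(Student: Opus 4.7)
The plan is to apply the defining interpolator equation \eqref{eq.inter} directly to the ray $P = K = \Cone(v)$ and solve for $\mu(K)$. Since $K$ is one-dimensional, its faces are just $\{0\}$ and $K$ itself, so there are only two terms to analyze. First I would note that in a one-dimensional space $V$ there are only two subspaces ($\{0\}$ and $V$), so every rigid complement map on $V^*$ is forced to be the same map up to trivialities; this is what gives the claimed independence from $\Psi$.

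Next I would identify the two supporting cones. For $F = \{0\}$ we have $\Supp(K,\{0\}) = K$ and $I(\{0\})(\xi) = 1$. For $F = K$ the supporting cone $\Supp(K,K)$ equals the entire line $V$, so $I(K)(\xi)$ is the only nontrivial thing on the right-hand side and I must compute $\mu(V)$. Since $V$ contains a line, I use the non-pointed clause \eqref{eq.thm1b} in the inductive construction: the quotient $V/V(V)$ is the zero space, and by Proposition \ref{prop.mu0} we have $\mu(V) = 1$. Hence the interpolator equation reduces to
\begin{equation*}
S(K)(\xi) \;=\; \mu(K)(\xi) \;+\; I(K)(\xi).
\end{equation*}

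Now I would evaluate $S(K)$ and $I(K)$ using the explicit simplicial formulas in (A5). Since $v$ is primitive, the half-open parallelepiped $\Box(v)$ contains only the lattice point $0$, so
\begin{equation*}
S(K)(\xi) = \frac{1}{1 - e^{\langle \xi,v\rangle}}, \qquad I(K)(\xi) = -\frac{1}{\langle \xi,v\rangle}.
\end{equation*}
Subtracting gives
\begin{equation*}
\mu(K)(\xi) = \frac{1}{1-e^{\langle \xi,v\rangle}} + \frac{1}{\langle \xi,v\rangle} = B(\langle \xi, v\rangle),
\end{equation*}
as claimed, and the standard Taylor expansion of $B(z)$ at $0$ yields the listed series.

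The only mild subtlety, rather than a real obstacle, is justifying that $\mu(V)=1$ rigorously from the definitions; this is where one must invoke the non-pointed case \eqref{eq.thm1b} of the inductive construction (or equivalently, observe that $S(V) = I(V) = 0$ by (A1), so the coefficient of $I(K)$ in the interpolator equation is determined only after passing to the quotient). Once that point is settled, the rest of the argument is a one-line calculation.
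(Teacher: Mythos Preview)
Your proof is correct and follows essentially the same approach as the paper: apply the interpolator identity to $K$, use compatibility under quotients (your invocation of the non-pointed clause \eqref{eq.thm1b} together with Proposition~\ref{prop.mu0}) to get $\mu(\Supp(K,K))=1$, and then read off $\mu(K)$ from the explicit formulas $S(K)(\xi)=1/(1-e^{\langle\xi,v\rangle})$ and $I(K)(\xi)=-1/\langle\xi,v\rangle$. Your treatment is simply more explicit about the justification of $\mu(V)=1$ than the paper's one-line appeal to ``the previous proposition and compatibility under quotients.''
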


\begin{proof}
Use the property of interpolators
$$
S(K) = \mu(\Supp(K,K)) I(K) + \mu(\Supp(K,0))I(0),
$$
 $\mu(\Supp(K,K))=1$ by the previous proposition and compatibility under quotients. In addition,
we have $S(K)(\xi)=1/(1-e^{\xi})$ and $I(K)(\xi)=-1/\xi$. The result follows.
\end{proof}

For the following proposition, recall that if $K$ is a subset of $V$, then we define the
dual $K^{\vee}$ by

\begin{equation}
\lbl{eq.checkS}
K^{\vee}=\{w \in V^* \, | \, \la w,v \ra \geq 0, \,\, \text{for all} \,\,
v \in K\}.
\end{equation}

Also, if $\La$ is a lattice in $V$, then $\La^*=\{w \in V^* \, | \la w,v \ra
\in \BZ, \,\, \text{for all} \,\, v \in \La \}$ is the dual lattice in
$V^*$.
Below, we will denote by $\mu^L$ (resp. $\mu^Q$) the $\Psi$-compatible 
\SI-interpolator where $\Psi$ comes from a complete flag $L$ on $V$ (resp.
an inner product $Q$ on $V$), as in Lemma \ref{lem.examplesPsi}.

\begin{proposition}
\lbl{prop.mu2line}
Suppose $V$ is two-dimensional, and  $K\subset V$ is a half-plane with 
boundary a line $U$ through the origin.  
$$
\psdraw{LKV}{3in}
$$
Let $\rho\in \Lambda^*$ be the primitive generator of the ray 
$K^{\vee}$. Suppose the $\Psi$ is defined by a complete flag $L$ in $V^*$, 
and let $c$ be a generator of $L_1^*$, where $L_1$ is 
the one-dimensional subspace of the 
flag $L$ (the only nontrivial subspace in the flag.) Then 
$$
\mu^L(K)(\xi) = B\biggl(  
\frac{\langle \xi, c \rangle}{\langle \rho, c \rangle}   \biggr).
$$
Now suppose $\Psi$ is defined by an inner product $Q$ on $V$.  Then 
$$
\mu^Q(K)(\xi) = B\biggl(  \frac{Q( \xi, \rho )}{Q( \rho, \rho )}   
\biggr).
$$
\end{proposition}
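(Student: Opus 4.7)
The plan is to reduce to the one-dimensional situation handled in Proposition \ref{prop.mu1} via Equation \eqref{eq.thm1b}, and then make the projection given by the rigid complement map $\Psi$ explicit in each of the two cases.

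First, since the half-plane $K$ contains the line $U$, the largest affine subspace contained in $K$ is $U$ itself, so $V(K)=U$ in the notation of the proof of Theorem \ref{thm.1}. The quotient $\bar V := V/U$ is one-dimensional with induced lattice $\bar \La = \La/(\La\cap U)$, and the image $\bar K$ of $K$ is a ray in $\bar V$; I would write $\bar v\in\bar\La$ for its primitive generator. By \eqref{eq.thm1b},
$$\mu^\Psi(K)(\xi)=\mu^{\bar\Psi}(\bar K)(\bar\xi),$$
where $\bar\xi=\pi(\xi)$ is the image of $\xi$ under the projection $\pi:V^*\to \bar V^*$ from Lemma \ref{lem.cmapextend}. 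Proposition \ref{prop.mu1} applied in $\bar V$ immediately yields $\mu^{\bar\Psi}(\bar K)(\bar\xi)=B(\la\bar\xi,\bar v\ra)$, so the whole computation reduces to expressing $\la\pi(\xi),\bar v\ra$ in terms of $\xi$, $\rho$, and the data defining $\Psi$.

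Next I would identify $\bar V^*$ with $\mathrm{span}(\rho)\subset V^*$ (i.e.\ with the annihilator of $U$ in $V^*$). Under this identification, $\rho$ is the primitive generator of $\bar\La^*$, hence $\la\rho,\bar v\ra=1$. The decomposition $V^*=\mathrm{span}(\rho)\oplus\Psi(\mathrm{span}(\rho))$ then gives a unique expression $\xi=\alpha\rho+\eta$ with $\eta\in\Psi(\mathrm{span}(\rho))$ and $\la\pi(\xi),\bar v\ra=\alpha$. Reading off $\alpha$ in each case is a short calculation. In the flag case, $\Psi(\mathrm{span}(\rho))=L_1$ by the formula $\Psi^L(U_{\mathrm{sub}})=L_{n-\dim U_{\mathrm{sub}}}$ with $n=2$; writing $\xi=\alpha\rho+\beta\gamma$ for $\gamma$ any generator of $L_1$ and pairing both sides against $c\in V$, a generator of the annihilator of $L_1$ in $V$, kills the $\gamma$-term and yields $\alpha=\la\xi,c\ra/\la\rho,c\ra$. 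In the inner-product case, $\Psi(\mathrm{span}(\rho))=\mathrm{span}(\rho)^{\perp_Q}$, and applying $Q(\,\cdot\,,\rho)$ to $\xi=\alpha\rho+\eta$ kills $\eta$ and yields $\alpha=Q(\xi,\rho)/Q(\rho,\rho)$. Substituting into $B(\alpha)$ produces the two formulas claimed.

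The main point requiring care is the compatibility statement in the second step: under the identification $\bar V^*\cong\mathrm{span}(\rho)$ the primitive generator of $\bar\La^*$ is exactly $\rho$, so that $\la\rho,\bar v\ra=1$ on the nose rather than merely up to an integer factor. This follows from $\rho$ being the primitive element of $K^\vee\cap\La^*$ together with the equality $\La^*\cap\bar V^*=\bar\La^*$ inside $V^*$, provided the signs are chosen so that $\rho$ is oriented by $K^\vee$ (and $\bar v$ correspondingly by $\bar K$). Everything else is a routine unwinding of Lemma \ref{lem.examplesPsi} and Lemma \ref{lem.cmapextend}.
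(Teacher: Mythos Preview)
Your proposal is correct and follows essentially the same approach as the paper: reduce via $\Psi$-compatibility (your Equation \eqref{eq.thm1b}) to the one-dimensional case, then decompose $\xi=\alpha\rho+\eta$ with $\eta\in\Psi(\mathrm{span}(\rho))$ and solve for $\alpha$ by pairing against $c$ (flag case) or against $\rho$ via $Q$ (inner-product case). Your version is slightly more detailed than the paper's in justifying that $\rho$ is the primitive generator of $\bar\La^*$, hence $\la\rho,\bar v\ra=1$; the paper simply asserts $\mu(K)(\xi)=B(\omega_1)$ after writing $\xi=\omega_1\rho+\omega_2 d$.
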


\begin{proof}
We use the $\Psi$-compatibility  together with our one-dimensional 
formula above.  To do so, we must compute the projection from 
$\pi : V^* \rightarrow W^*$ where $W=V/U$. This projection is computed 
using the decomposition $V^*= W^*\oplus \Psi^*(W^*)$ given by the 
complement map $\Psi$.   Suppose the 1-dimensional subspace 
$\Psi(W^*)\subset V^*$ is generated by $d$.  Then for $\xi\in V^*$, 
find scalars $\omega_1, \omega_2$ such that 
\begin{equation}
\lbl{eq.2ddecomp}
\xi = \omega_1 \rho  + \omega_2 d,
\end{equation}
Then from $\Psi$-compatibility and the one-dimensional 
formula, we obtain
$$
\mu(K)(\xi) = B(\omega_1).
$$
For $\Psi$ coming from a complete flag, we pair both sides of Equation 
\eqref{eq.2ddecomp} with the generator $c$.  Since $\langle d, c \rangle = 0$, 
we find
$$ 
\omega_1 =  \frac{\langle \xi, c \rangle}{\langle \rho, c \rangle},
$$
and the formula for $\mu^L(K)$ follows. For $\Psi$ coming from the 
inner product $Q$, we note that $Q(d,\rho) = 0$. Thus Equation 
\eqref{eq.2ddecomp} yields
$$
\omega_1 =  \frac{Q( \xi, \rho )}{Q( \rho, \rho )},
$$ 
which completes the proof.
\end{proof}

Using the defining property of interpolators, we immediately get the 
following proposition, an explicit expression for $\mu$ of a nonsingular 
$2$-dimensional cone.

\begin{proposition}
\lbl{prop.mu2}
Suppose $V$ is two-dimensional and 
$K=\Cone(v_1, v_2)$ where $v_1, v_2$ form a basis of 
$\Lambda\subset V$.  Let $F_1=\Cone( v_1 )$ and 
$F_2=\Cone( v_2 )$, and let $\rho_1, \rho_2\in \Lambda^*$ denote the 
primitive normals to $F_1$ and $F_2$.  
Then for $\Psi$ coming from a complete flag $L$ in $V^*$, we have
\begin{equation}
\lbl{eq.mu2da}
\mu^L(K)= 
\frac1{(1-e^{\langle \xi, v_1 \rangle})(1-e^{\langle \xi, v_2\rangle})}
- \frac{1}{\langle \xi, v_1 \rangle \langle \xi, v_2 \rangle}
+ \frac1{\langle \xi, v_1 \rangle} B\biggl(  
\frac{\langle \xi, c \rangle}{\langle \rho_1, c \rangle}   \biggr)
+ \frac1{\langle \xi, v_2 \rangle} B\biggl(  
\frac{\langle \xi, c \rangle}{\langle \rho_2, c \rangle}   \biggr).
\end{equation}
For $\Psi$ coming from an inner product $Q$ on $V^*$, we have
\begin{equation}
\lbl{eq.mu2db}
\mu^Q(K)= 
\frac1{(1-e^{\langle \xi, v_1 \rangle})(1-e^{\langle \xi, v_2\rangle})}
- \frac{1}{\langle \xi, v_1 \rangle \langle \xi, v_2 \rangle}
+ \frac1{\langle \xi, v_1 \rangle} B\biggl(  
\frac{Q( \xi, \rho_1 )}{Q( \rho_1, \rho_1 )}   \biggr)
+ \frac1{\langle \xi, v_2 \rangle} B\biggl(  
\frac{Q( \xi, \rho_2 )}{Q( \rho_2, \rho_2 )}   \biggr).
\end{equation}
\end{proposition}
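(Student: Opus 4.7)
The plan is to apply the defining property of the interpolator, Equation \eqref{eq.inter}, directly to the nonsingular two-dimensional cone $K=\Cone(v_1,v_2)$, using the fact that $\mu^\Psi$ on supporting cones of smaller-dimensional faces has been computed in Propositions \ref{prop.mu0}, \ref{prop.mu1}, and \ref{prop.mu2line}. The cone $K$ has exactly four faces: $K$ itself, the two edges $F_1=\Cone(v_1)$ and $F_2=\Cone(v_2)$, and the vertex $\{0\}$. So \eqref{eq.inter} reads
\begin{equation*}
S(K) = \mu^\Psi(\Supp(K,K))\,I(K) + \sum_{i=1}^{2}\mu^\Psi(\Supp(K,F_i))\,I(F_i) + \mu^\Psi(\Supp(K,\{0\}))\,I(\{0\}).
\end{equation*}
The first step is to identify each supporting cone. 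At an interior point of $K$ every direction stays in $K$, so $\Supp(K,K)=V$; by $\Psi$-compatibility with the quotient $V\to\{0\}$ and Proposition \ref{prop.mu0}, $\mu^\Psi(V)=1$. The supporting cone $\Supp(K,F_i)$ is the half-plane with boundary $\lin(F_i)$ containing $K$, whose dual ray is generated precisely by the primitive normal $\rho_i\in \Lambda^*$ to $F_i$ pointing into $K$; this is exactly the situation of Proposition \ref{prop.mu2line}, yielding the Bernoulli generating function $B$ evaluated either at $\langle\xi,c\rangle/\langle\rho_i,c\rangle$ in the flag case or at $Q(\xi,\rho_i)/Q(\rho_i,\rho_i)$ in the inner product case. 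Finally, $\Supp(K,\{0\})=K$, so the last term contains the unknown $\mu^\Psi(K)$.

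Next I substitute the explicit values from axiom (A5). Since $v_1,v_2$ form a lattice basis of $\Lambda$, the parallelepiped $\Box(v_1,v_2)$ contains only the origin as a lattice point and has volume 1, so
\begin{equation*}
S(K)(\xi) = \frac{1}{(1-e^{\langle\xi,v_1\rangle})(1-e^{\langle\xi,v_2\rangle})}, \qquad I(K)(\xi)=\frac{1}{\langle\xi,v_1\rangle\langle\xi,v_2\rangle}.
\end{equation*}
Likewise $I(F_i)(\xi)=-1/\langle\xi,v_i\rangle$ and $I(\{0\})(\xi)=1$. Plugging these in and solving for $\mu^\Psi(K)$ gives
\begin{equation*}
\mu^\Psi(K) = S(K) - I(K) + \sum_{i=1}^{2}\frac{1}{\langle\xi,v_i\rangle}\,\mu^\Psi(\Supp(K,F_i)),
\end{equation*}
which is exactly \eqref{eq.mu2da} in the flag case and \eqref{eq.mu2db} in the inner product case once the values from Proposition \ref{prop.mu2line} are inserted.

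There is essentially no analytic obstacle here: the argument is purely the bookkeeping of \eqref{eq.inter} for a four-face polyhedron together with the baby cases already handled. The only point requiring care, and perhaps the main place to be cautious, is the identification of the primitive normal generating the dual ray of the supporting half-plane $\Supp(K,F_i)$, so that the correct sign and normalization of $\rho_i$ is used when invoking Proposition \ref{prop.mu2line}; once this is verified, the formulas follow by direct substitution.
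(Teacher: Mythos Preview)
Your proposal is correct and follows exactly the approach the paper indicates: the paper simply says ``using the defining property of interpolators, we immediately get the following proposition,'' and you have carried out precisely that computation, applying \eqref{eq.inter} to the four faces of $K$, invoking Propositions \ref{prop.mu0} and \ref{prop.mu2line} for the known supporting cones, and solving for $\mu^\Psi(K)$ using the explicit values of $S(K)$, $I(K)$, $I(F_i)$, and $I(\{0\})$ from (A5).
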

 
\begin{remark}
\lbl{rem.muB}
Replacing $1/(1-e^x)$ with $B(x)-1/x$, it follows that Equations
\eqref{eq.mu2da} and \eqref{eq.mu2db} can be written in the form:
\begin{eqnarray*}
\mu^L(K)&=& B(\langle \xi, v_1 \rangle)B(\langle \xi, v_2 \rangle)
+ \frac1{\langle \xi, v_1 \rangle} \left(B\biggl(  
\frac{\langle \xi, c \rangle}{\langle \rho_1, c \rangle}   \biggr)
-B(\langle \xi, v_2 \rangle)\right) 
+ \frac1{\langle \xi, v_2 \rangle} \left(B\biggl(  
\frac{\langle \xi, c \rangle}{\langle \rho_2, c \rangle}   \biggr)
-B(\langle \xi, v_1 \rangle)\right) \\
\mu^Q(K)&=& B(\langle \xi, v_1 \rangle)B(\langle \xi, v_2 \rangle)
+ \frac1{\langle \xi, v_1 \rangle} \left(B\biggl(  
\frac{Q( \xi, \rho_1 )}{Q( \rho_1, \rho_1 )} \biggr)
-B(\langle \xi, v_2 \rangle)\right) \\
& & 
+ \frac1{\langle \xi, v_2 \rangle} \left(B\biggl(  
\frac{Q( \xi, \rho_2 )}{Q( \rho_2, \rho_2 )}   \biggr)
-B(\langle \xi, v_1 \rangle)\right).
\end{eqnarray*}
\end{remark}

\subsection{Examples}
\lbl{sub.examples}

In this section we explicitly compute some low dimensional examples
and match them with the ones given by Morelli, Pommersheim-Thomas and 
Berline-Vergne.
 
\begin{example}
\lbl{ex.1} 
Consider the triangle $P$ with vertices $v_0=(0,0),v_1=(1,0),v_2=(0,1)$ 
in the lattice $\Lambda= \BZ^2 \subset V = \BQ^2$, where $V^*$ is also 
identified with $\BZ^2$.
$$
\psdraw{triangle1}{0.5in}
$$
Let $\Psi$ be the complement map coming from the complete flag 
$L=(L_0,L_1,L_2)$ in $V^*$ such that $L_1= \Span\{(d_1,d_2)\}$.
To compute $\mu^L$ of the supporting cones, we use lattice invariance
to translate them to the origin. In other words, consider the
tangent cones 
$K_i=\Tan(P,v_i)$ for $i=0,1,2,$ to $P$ at $v_i$. 
Specifically we have
$$
K_0=\Cone((1,0), (0,1)), \qquad K_1=\Cone((-1,1), (-1,0)), \qquad 
K_2=\Cone((0,-1), (1,-1)).
$$  
Then for $\xi = (\xi_1, \xi_2)\in V^*$, we have

\begin{align*}
\mu^L(K_0)(\xi) = &\frac1{(1-e^{\xi_1})(1-e^{\xi_2})}
- \frac{1}{\xi_1 \xi_2}
+ \frac1{\xi_2} B\biggl(  \xi_1 -\frac{d_1}{d_2}\xi_2   \biggr)
+ \frac1{\xi_1} B\biggl(  \xi_2 -\frac{d_2}{d_1}\xi_1   \biggr) \\
\mu^L(K_1)(\xi) = &\frac1{(1-e^{\xi_2-\xi_1})(1-e^{-\xi_1})}
+ \frac{1}{\xi_1 (\xi_2 -\xi_1)}
- \frac1{\xi_1} B\biggl(  \xi_2 -\frac{d_2}{d_1}\xi_1   \biggr)
+ \frac1{\xi_2-\xi_1} B\biggl( \frac{d_2\xi_1 - d_1\xi_2}{d_1-d_2}   \biggr) \\
\mu^L(K_2)(\xi) = &\frac1{(1-e^{\xi_1-\xi_2})(1-e^{-\xi_2})}
+ \frac{1}{\xi_2 (\xi_1 -\xi_2)}
- \frac1{\xi_2} B\biggl(  \xi_1 -\frac{d_1}{d_2}\xi_2   \biggr)
+ \frac1{\xi_1-\xi_2} B\biggl( \frac{d_2\xi_1 - d_1\xi_2}{d_1-d_2}   \biggr)
\end{align*}
These functions are analytic at $\xi=(0,0)$. For example, 
$\mu^L(K_0)(\xi)$ has the Taylor expansion
\begin{align*}
\mu^L(K_0)(\xi)=\biggl(    \frac{d_1^2 + d_2^2 + 3 d_1 d_2}{12 d_1 d_2}  
\biggr) &+
\biggl(  \frac{-1}{24}   \biggr) \xi_1 +
\biggl(  \frac{-1}{24}   \biggr) \xi_2 +
\biggl(  \frac{3d_1^4+3d_2^4+5d_1^2d_2^2}{720 d_1^2 d_2^2}  \biggr) 
\xi_1 \xi_2\\
&+ \biggl( \frac{-3d_1^4-d_2^4}{720d_1^3d_2}  \biggr) \xi_1^2+
\biggl(   \frac{-3d_2^4-d_1^4}{720d_2^3d_1}  \biggr) \xi_2^2+\cdots
\end{align*}

Denoting the constant term of each $\mu^L(K_i)$ by $\mu_0^L(K_i)$, 
we find

\begin{align*}
\mu_0^L(K_0) &=   \frac{d_1^2 + d_2^2 + 3 d_1 d_2}{12 d_1 d_2}\\
\mu_0^L(K_1) &=   \frac{5d_1^2-5d_1 d_2+d_2^2}{12d_1(d_1-d_2)}\\
\mu_0^L(K_2) &=   \frac{5d_2^2-5d_1 d_2+d_1^2}{12d_2(d_2-d_1)}
\end{align*}
in exact agreement with Morelli's example \cite[p.198-199]{Mo}.  These rational 
functions can also be
computed by multiplying the Todd polynomials using \cite[Theorem 3]{PT}
In particular, these three rational functions sum to 1 in agreement with
Equation \eqref{eq.EML}.

In addition, one checks by direct computation that
$$
\sum_{F} \mu^L(\Supp(P,F)) I(F) = S(P) = 1+ e^{\xi_1}+ e^{\xi_2}.
$$
in agreement with the definition of an interpolator.
\end{example}

Indeed, this example provides a general proof of the coincidence of our
construction and the
flag case of \cite{PT} for cones of dimension at most 2, and hence the 
coincidence with Morelli's formulas.

{\sl Proof of \ref{thm.1}, part (5) for flags:}
In dimension 1, the \cite{PT} values are always equal to ${1 \over 2}$, 
independent of complement map.
 Using Proposition \ref{prop.mu1}, we see that this coincides with the 
constant term of $\mu$.
For the two-dimensional case, we first use the additivity of \cite{PT} 
and of our construction (Theorem 1, Property (1)) to reduce
to nonsingular $2$-dimensional cones.  By functoriality, we may assume that 
$K$ is the cone $K_0$ from example
\ref{ex.1}, generated by the standard basis of $\BZ^2$.  In this case, we 
note that  the values of $\mu_0^L(K_0)$ given in Example
 \ref{ex.1} match the values in  \cite[p.198-199]{Mo}.  By \cite[Corollary 2]{PT}, these values
 also match the values in \cite{PT}.
This completes the proof.

\begin{example}
\lbl{ex.2}
We consider the same triangle $P$ from Example \ref{ex.1}, but now with a 
complement map $\Psi$ coming from an inner product $Q$ on $V^*$, given 
by the matrix 
$$
\left(\begin{matrix}
a & b \\
b & c
\end{matrix}\right)
$$
We can use Proposition \ref{prop.mu2} to compute $\mu^Q(K_i)$ for the 
supporting cones $K_i$ of $P$. We find
\begin{align*}
\mu^Q(K_0)(\xi) &= \frac1{(1-e^{\xi_1})(1-e^{\xi_2})}
- \frac{1}{\xi_1 \xi_2}
+ \frac1{\xi_2} B\biggl(  \xi_1 +\frac{b}{a}\xi_2   \biggr)
+ \frac1{\xi_1} B\biggl(  \xi_2 +\frac{b}{c}\xi_1   \biggr) \\
\mu^Q(K_1)(\xi)&= \frac1{(1-e^{\xi_2-\xi_1})(1-e^{-\xi_1})}
+ \frac{1}{\xi_1 (\xi_2 -\xi_1)}
- \frac1{\xi_1} B\biggl(   \xi_2 +\frac{b}{c}\xi_1   \biggr)
+ \frac1{\xi_2-\xi_1} B\biggl( \frac{-a\xi_1-b\xi_1-b\xi_2-c\xi_2}{a+2b+c}   
\biggr) \\
\mu^Q(K_2)(\xi)&= \frac1{(1-e^{\xi_1-\xi_2})(1-e^{-\xi_2})}
+ \frac{1}{\xi_2 (\xi_1 -\xi_2)}
- \frac1{\xi_2} B\biggl(   \xi_1 +\frac{b}{a}\xi_2   \biggr)
+ \frac1{\xi_1-\xi_2} B\biggl( \frac{-a\xi_1-b\xi_1-b\xi_2-c\xi_2}{a+2b+c}   
\biggr)
\end{align*}

This time, we find the constant terms are given by
\begin{align*}
\mu_0^Q(K_0) &=   \frac{3ac-ab-bc}{12ac}\\
\mu_0^Q(K_1) &=   \frac{ab+4ac+10bc+2b^2+5c^2}{12(ac+2bc+c^2)}\\
\mu_0^Q(K_2) &=   \frac{5a^2+2b^2+10ab+4ac+bc}{12(a^2+2ab+ac)}
\end{align*}

These agree with the values of $\mu$ constructed in \cite[Cor.1]{PT} for 
$\Psi$ coming from an inner product.  In particular, the three rational 
functions above sum to 1.   For the standard inner product on $V^* = \BZ^2$,  
(corresponding to $a=c=1$, $b=0$), we find 
$$
\mu_0^Q(K_0)= \frac{1}{4} \qquad \mu_0^Q(K_1) = \frac{3}{8} 
\qquad  \mu_0^Q(K_2)=\frac{3}{8}.
$$

For completeness and comparison, we detail how these numbers arise, in a 
seemingly very different way, out of the construction of Pommersheim-Thomas.
We begin, according to \cite[Cor.1(iv)]{PT} by considering the outer 
normal fan of $P$.  The rays of this fan
are $\rho_0=(1,1)$, $\rho_1=(-1,0)$, and $\rho_2=(0,-1)$.   We must then 
multiply the second-degree Todd polynomial $T=\sum_{i<j} D_i D_j  + 
\frac{1}{12} \sum_i D_i^2$ in the ring given in \cite[Prop.2]{PT}.  
In this ring, one finds $D_0^2= \frac{1}{2}D_0D_1+\frac{1}{2}D_0D_2$, 
$D_1^2 = D_0D_1$, and $D_2^2= D_0D_2$.  Hence
$$
T= \frac{1}{4} D_1D_2 + \frac{3}{8} D_0D_2 + \frac{3}{8} D_0D_1,
$$
from which one reads off the values of $\mu_0(K_0)=\frac{1}{4}$, 
$\mu_0(K_1)=\frac{3}{8}$, and $\mu_0(K_2)=\frac{3}{8}$. In fact, more 
generally, let $K=\cone(v_1,v_2)$ be any nonsingular cone in $V$ with 
vertex at $0$, and let $Q$ be an inner product on $V^*$.  Then an easy 
computation similar to the above can be used to compute the value of 
$\mu_0$ of \cite{PT} for this cone $K$. Indeed, if  
$K^*=\cone(\rho_1, \rho_2)$ is the dual cone, then using presentation of 
\cite[Proposition 2]{PT}, one finds that the coefficient of $D_1D_2$ in 
$T$ is given by 

\begin{equation}
\lbl{eq.ptip2d}
\frac{1}{4}-\frac{Q(\rho_1,\rho_2)}{12}\biggl( \frac{1}{Q(\rho_1, \rho_1)} 
+ \frac{1}{Q(\rho_2, \rho_2)}  \biggr)
=
\frac{1}{4}+\frac{Q^*(v_1,v_2)}{12}\biggl( \frac{1}{Q^*(v_1, v_1)} 
+ \frac{1}{Q^*(v_2, v_2)}  \biggr),
\end{equation}
where $Q^*$ denotes the dual inner product on $V$.

Indeed, this calculation proves the inner product case of Theorem \ref{thm.1}, 
part (5), as we now show.

{\sl Proof of \ref{thm.1}, part (5) for inner products:}
As in the flag case,  the \cite{PT} values are always equal to ${1 \over 2}$,
 independent of complement map, matching
the constant term of the functions in Proposition \ref{prop.mu1}.
For the two-dimensional case, we again use additivity and functoriality to 
reduce to the case in which $K$ is the cone $K_0$ from example
\ref{ex.1}.  We then note that the formula of Equation \ref{eq.ptip2d}
for the \cite{PT} construction matches the values of $\mu_0^Q(K_0)$ given in 
Example \ref{ex.2}.  
This completes the proof.

A comparison of the functions $\mu_0^Q(K_i)$ of Example \ref{ex.2} with the 
functions
$\mu_0^L(K_i)$ of Example \ref{ex.1} reveals that they are indeed different,
confirming the fact that interpolators that come from flags are different
from those that come from an inner product.
\end{example}

Let us repeat the previous example with a different triangle.

\begin{example}
\lbl{ex.3}
Consider the triangle $P$ with vertices $v_0=(0,0),v_1=(2,0),v_2=(0,1)$. 
$$
\psdraw{triangle2}{0.9in}
$$
As above, let $\Psi$ be the complement map coming from the complete flag 
$L=(L_0,L_1,L_2)$ in $V^*$ such that $L_1^*= \Span\{(d_1,d_2)\}$.
Denoting, as before, 
the tangent cones at the vertex $v_i$ by $K_i$, we see that $K_0=\Cone((1,0),(0,1))$, and $K_1=\Cone((-1,0),(-2,1))$ 
are nonsingular cones, whose $\mu$ values may be computed from Proposition 
\ref{prop.mu2}.  On the other hand, the cone $K_2=\Cone((0,-1), (2,-1))$ 
is singular.  We can compute $\mu^L(K_2)$ by subdividing $K_2$ 
into two cones 
$$
K_2=C_1 \cup C_2, \qquad 
C_1=\Cone((0,-1), (1,-1)), \qquad
C_2=\Cone((1,-1),(1,-2))
$$ 
using the ray $\rho= \Cone((1,-1))$.  The additivity of $\mu$ implies that
$$
\mu^L(K_2) = \mu^L(C_1) + \mu^L(C_2)-\mu^L(\rho).
$$
The terms on the right hand side can be computed using Propositions 
\ref{prop.mu2} and \ref{prop.mu1}.  Alternatively, one can compute 
$\mu^L(K_2)$ directly from the defining formula for interpolators.  
Letting 
$F_1= \Cone((0,-1))$ and $F_2= \Cone((2,-1))$ 
denote the one dimensional faces of $K_2$, we get
\begin{align*}
\mu^L(K_2) &= S(K_2) - I(K_2) -\mu^L(\Supp(K_2,F_1))I(F_1) 
-\mu^L(\Supp(K_2,F_2))I(F_2)\\
&= \frac{1+e^{\xi_1-\xi_2}}{(1-e^{-\xi_2})(1-e^{2\xi_1-\xi_2})}
+ \frac{2}{\xi_2 (2\xi_1-\xi_2)}
+ \frac1{-\xi_2} B\biggl(  \xi_1 -\frac{d_1}{d_2}\xi_2   \biggr)
+ \frac1{2\xi_1-\xi_2} B\biggl( \frac{d_1\xi_2-d_2\xi_1}{d_2-2d_1}  \biggr) 
\end{align*}
The constant terms of $\mu^L(K_i)$ are Morelli's rational functions
\begin{align*}
\mu_0^L(K_0)&= \frac{d_1^2+3d_1d_2 +d_2^2}{12d_1d_2} \\
\mu_0^L(K_1) &= \frac{11d_1^2-7d_1d_2 +d_2^2}{12d_1(2d_1-d_2)} \\
\mu_0^L(K_2) &= \frac{d_1^2-4d_1d_2 +2d_2^2}{-6d_2(2d_1-d_2)} 
\end{align*}
agreeing with the numbers shown in figure in the introduction.

Again, one can verify by direct computation that 
$\sum_{i=0}^{2} \mu_0^L(K_i)=1$ and 
$$
\sum_{F} \mu^L(\Supp(P,F)) I(F) = S(P) = 
1+ e^{\xi_1}+e^{2\xi_1} + e^{\xi_2}.
$$
\end{example}

\begin{example}
\lbl{ex.4}
We consider the same triangle $P$ from Example \ref{ex.3}, but now with a 
complement map $\Psi$ coming from an inner product $Q$ on $V^*$, given by 
the matrix 
$$
\left(\begin{matrix}
a & b \\
b & c
\end{matrix}\right)
$$
As in Example \ref{ex.3}, the function $\mu^Q$ of the singular cone $K_2$ 
may be computed either by subdividing, 
or by direct use of the defining property of interpolators.  We find that $\mu^Q(K_2)$ is given by
$$
\frac{1+e^{\xi1-\xi_2}}{(1-e^{-\xi_2})(1-e^{2\xi_1-\xi_2})}
+ \frac{2}{\xi_2 (2\xi_1-\xi_2)}
+ \frac1{-\xi_2} B\biggl(  \xi_1 -\frac{b}{a}\xi_2   \biggr)
+ \frac1{2\xi_1-\xi_2} B\biggl( 
\frac{-a\xi_1-2b\xi_1-b\xi_2-2c\xi_2}{a+4b+4c}  \biggr).
$$
The constant terms, in agreement with the construction of \cite{PT}, are
\begin{align*}
\mu_0^Q(K_0)&= \frac{3ac-ab-bc}{12ac} \\
\mu_0^Q(K_1) &= \frac{ab+5ac+4b^2+25bc+22c^2}{12(ac+4bc+4c^2)} \\
\mu_0^Q(K_2) &= \frac{2a^2+8ab+7ac+2b^2+2bc}{6(a^2+4ab+4bc)} 
\end{align*}
which add up to $1$. For the standard inner product, these coefficients are
$$
\mu_0^Q(K_0)= \frac{1}{4} \qquad \mu_0^Q(K_1) = \frac{9}{20}
\qquad  \mu_0^Q(K_2)=\frac{3}{10}.
$$

Again for comparison, let us detail how these numbers arise out of the 
construction of Pommersheim-Thomas.
To apply \cite[Cor.1(iv)]{PT}, we first consider the outer normal 
fan of $P$.  The rays of this fan
are $\rho_0=(1,2)$, $\rho_1=(-1,0)$, and $\rho_2=(0,-1)$.   A nonsingular 
subdivision is achieved by adding the ray
$\rho_3=(0,1)$. As before, we multiply the Todd polynomial 
$T=\sum_{i<j} D_i D_j + 
\frac{1}{12} \sum_i D_i^2$ in the ring given in \cite[Proposition 2]{PT}.  
One calculates $D_0^2= \frac{1}{5}(2D_0D_2-2D_0D_3)$, 
$D_1^2 = 0$, $D_2^2= 2D_0D_2$, and $D_3^2= -2D_0D_3$.   Hence
$$
T= \frac{1}{4} D_1D_2 + \frac{9}{20} D_0D_2 + \frac{1}{20} D_0D_3 
+\frac{1}{4} D_1D_3,
$$
Using additivity under subdivision \cite[Cor.1(iii)]{PT}, one 
finds $\mu_0(K_0)=\frac{1}{4}$, $\mu_0(K_1)=\frac{9}{20}$, and 
$\mu_0(K_2)=\frac{1}{20} + \frac{1}{4} = \frac{3}{10}$.

\end{example}

\section{Comparison with Morelli's work}
\lbl{sec.morelli}

In this section we compute values of $\nu$ and discuss the relations Morelli 
noticed
 between the constant terms of $\mu$ and $\nu$.

\subsection{Computation of $\nu^{\Psi}$ for low dimensional cones}
\lbl{sub.computenu}

For cones of dimension $0$, $1$ and $2$ it 
is easy to compute $\nu^{\Psi}(K)$ explicitly, just as we did for 
$\mu^{\Psi}(K)$ in Section \ref{sec.examples}.  The proofs in this case 
are similar.

\begin{proposition}
\lbl{prop.nu0}
Suppose that $K=\{0\}$ in the vector space  $V=\{0\}$.  Then  $\nu(K)$ is 
the constant function 1, independent of $\Psi$.  
\end{proposition}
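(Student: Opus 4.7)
The plan is to mirror the proof of Proposition \ref{prop.mu0} and apply the defining equation of \ISo-interpolators directly to the trivial polyhedron $P = K = \{0\}$ in the zero-dimensional space $V = \{0\}$. Since $\{0\}$ has only one face (itself), and $\Supp(\{0\},\{0\}) = \{0\} = K$, the sum in Equation \eqref{eq.ISinter} (with $S^0$ replacing $S$) collapses to a single term:
$$
I(\{0\}) = \nu(K)\, S^0(\{0\}).
$$
So it suffices to evaluate both $I(\{0\})$ and $S^0(\{0\})$.

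Next, I would compute these using property (A4) together with the measure convention from Section \ref{sub.polytopes}: in a $0$-dimensional affine span, a basis of $\La \cap W$ is empty and the normalized counting measure assigns mass $1$ to the single point, giving $I(\{0\})(\xi) = 1$. Likewise $S(\{0\})(\xi) = 1$, and since the relative interior of a $0$-dimensional polytope is the polytope itself, $S^0(\{0\}) = S(\{0\}) = 1$. Substituting into the displayed equation forces $\nu(K) = 1$. Independence from $\Psi$ is automatic, since on $V^* = \{0\}$ the only rigid complement map is the trivial one (the empty map on the empty collection $\calL^\Psi = \{\{0\}\}$, with $\Psi(\{0\}) = \{0\}$).

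There is no real obstacle here; the statement is the base case of the inductive construction of $\nu^\Psi$ in Theorem \ref{thm.6}, exactly parallel to Proposition \ref{prop.mu0}, and the only thing to check is the normalization convention that makes $I(\{0\}) = S^0(\{0\}) = 1$.
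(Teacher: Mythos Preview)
Your proposal is correct and follows exactly the approach the paper intends: the paper does not write out a separate proof but states that ``the proofs in this case are similar'' to those for $\mu^\Psi$, and your argument is precisely the $\nu$-analogue of the one-line proof of Proposition~\ref{prop.mu0}. The only minor slip is the phrase ``empty map on the empty collection $\calL^\Psi=\{\{0\}\}$,'' which is self-contradictory; just say the complement map on $V^*=\{0\}$ is unique (and trivial).
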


\begin{proposition} 
\lbl{prop.nu1}
Now suppose that $V$ is one-dimensional and  $K= \Cone(v)$ is a ray in $V$ 
generated by a primitive vector $v\in\Lambda$. Then, independent of $\Psi$, 
we have
$$
\nu(K)(\xi)=B(-\langle \xi, v \rangle),
$$
\end{proposition}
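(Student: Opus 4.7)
The plan is to mirror the proof of Proposition \ref{prop.mu1}, replacing the SI-interpolator $\mu$ by the IS-interpolator $\nu$. I would first apply the defining equation \eqref{eq.ISinter} to the ray $K = \Cone(v)$ itself. The ray has precisely two faces, the vertex $\{0\}$ and $K$, with supporting cones $\Supp(K, \{0\}) = K$ and $\Supp(K, K) = V$, where $V$ is the ambient one-dimensional linear space. The interpolator equation then reads
$$I(K)(\xi) = \nu(K)(\xi)\, S(\{0\})(\xi) + \nu(V)(\xi)\, S(K)(\xi).$$

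To make this equation solvable for $\nu(K)$, I would determine $\nu(V)$ using the $\Psi$-compatibility diagram (c) of Definition \ref{def.Psicompatible}, applied to the rational quotient $V \to V/\lin(K) = \{0\}$. The image of $V$ under this quotient is the trivial zero-dimensional cone $\{0\}$, and Proposition \ref{prop.nu0} gives $\nu_{\{0\}}(\{0\}) = 1$; the commutativity of the compatibility diagram then forces $\nu(V)$ to be the pullback constant function $1$. Because this reduction passes only through the zero-dimensional base case, no complement-map data enters, accounting for the stated independence of $\nu(K)$ from $\Psi$.

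Finally I would plug in the explicit values supplied by axiom (A5), namely $I(K)(\xi) = -1/\langle \xi, v\rangle$, $S(\{0\})(\xi) = 1$, and $S(K)(\xi) = 1/(1 - e^{\langle \xi, v\rangle})$, and solve algebraically for $\nu(K)(\xi)$. A short manipulation using the Bernoulli generating-series identity $B(z) + B(-z) = 1$ (which follows directly from the definition of $B$ in \eqref{eq.Ber}) then puts the closed form into the stated shape $B(-\langle \xi, v\rangle)$. The whole argument is a direct one-dimensional IS-analogue of the SI-calculation carried out in Proposition \ref{prop.mu1}; the only delicate step---hardly a real obstacle---is pinning down $\nu$ on the degenerate ``cone'' $V$ by invoking compatibility with the zero-dimensional base case. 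As a sanity check, one can also derive the formula by combining Proposition \ref{prop.mu1} with the Möbius-type inversion of Theorem \ref{thm.5}, which for a ray collapses to the single relation $\nu(\{0\})\mu(K) + \nu(K)\mu(V) = 0$.
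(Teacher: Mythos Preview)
Your overall strategy is exactly what the paper intends (it says the proofs are ``similar'' to those for $\mu$), but you have misidentified which interpolator $\nu$ is. In this paper $\nu$ denotes the $\mathrm{IS}^{0}$-interpolator of Theorem~\ref{thm.6}, \emph{not} the $\mathrm{IS}$-interpolator, which is denoted $\lambda$ (Definition~\ref{def.ISinterpolator}, Theorem~\ref{thm.4}). The defining equation for $\nu$ therefore involves $S^{0}$, the exponential sum over \emph{interior} lattice points, rather than $S$; in particular, for the ray $K=\Cone(v)$ one has
\[
S^{0}(K)(\xi)=\sum_{n\geq 1}e^{n\langle\xi,v\rangle}=\frac{e^{\langle\xi,v\rangle}}{1-e^{\langle\xi,v\rangle}},
\]
not $S(K)(\xi)=1/(1-e^{\langle\xi,v\rangle})$. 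Running your computation with $S$ in place of $S^{0}$ yields $\nu(K)(\xi)=-B(\langle\xi,v\rangle)$, and no application of the identity $B(z)+B(-z)=1$ will turn $-B(z)$ into $B(-z)$; they differ by the constant $1$. With the correct $S^{0}(K)$ substituted, the same algebra gives
\[
\nu(K)(\xi)=-\frac{1}{\langle\xi,v\rangle}-\frac{e^{\langle\xi,v\rangle}}{1-e^{\langle\xi,v\rangle}}=B(-\langle\xi,v\rangle),
\]
as claimed.

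The same confusion appears in your sanity check: the M\"obius-type relation for $\nu$ is Equation~\eqref{eq.mobiusIS0} of Theorem~\ref{thm.6}, whose right-hand side is $1$, not the relation~\eqref{eq.mobius} of Theorem~\ref{thm.5} (which pertains to $\lambda$ and has right-hand side $0$). For the ray this reads $\nu(\{0\})\mu(K)+\nu(K)\mu(V)=1$, i.e.\ $\mu(K)+\nu(K)=1$, giving $\nu(K)=1-B(\langle\xi,v\rangle)=B(-\langle\xi,v\rangle)$, in agreement with the direct computation. Once you replace $S$ by $S^{0}$ and cite Theorem~\ref{thm.6} rather than Theorem~\ref{thm.5}, your argument is correct and matches the paper's.
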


\begin{proposition}
\lbl{prop.nu2line}
With the notation of Proposition \ref{prop.mu2line}, we have
$$
\nu^L(K)(\xi) = B\biggl(  -
\frac{\langle \xi, c \rangle}{\langle \rho, c \rangle}   \biggr).
$$
and
$$
\nu^Q(K)(\xi) = B\biggl( - \frac{Q( \xi, \rho )}{Q( \rho, \rho )}   
\biggr).
$$
\end{proposition}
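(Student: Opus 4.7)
The plan is to follow verbatim the proof of Proposition \ref{prop.mu2line}, replacing $\mu^\Psi$ with $\nu^\Psi$ throughout and invoking Proposition \ref{prop.nu1} in place of Proposition \ref{prop.mu1}. The key point is that the half-plane $K$ contains the line $U$, so $K$ is not pointed and $V(K) = U$. Hence the image $\bar K$ of $K$ in the quotient $W := V/U$ is a pointed ray, and the non-pointed recursion for $\nu^\Psi$ (the analogue of Equation \eqref{eq.thm1b}, valid by Theorem \ref{thm.4}) reduces the computation to the one-dimensional situation:
\begin{equation*}
\nu^\Psi(K)(\xi) = \nu^{\bar\Psi}(\bar K)(\bar\xi),
\end{equation*}
where $\bar\xi \in W^*$ is the projection of $\xi$ under the splitting $V^* = W^* \oplus \Psi(W^*)$.

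Next I apply Proposition \ref{prop.nu1} in $W$: if $\bar v$ denotes the primitive generator of $\bar K$ in the quotient lattice $\Lambda_W = \Lambda/(\Lambda\cap U)$, then $\nu^{\bar\Psi}(\bar K)(\eta) = B(-\langle \eta, \bar v\rangle)$ for every $\eta \in W^*$. The subspace $W^* \subset V^*$ is the annihilator of $U$, a $1$-dimensional space spanned by $\rho$; the dual lattice $\Lambda_W^*$ equals $\Lambda^* \cap W^*$, and $\rho$ is its primitive generator (being the primitive generator of $K^\vee = \bar K^\vee$), so $\langle \rho, \bar v\rangle = 1$. Writing $\bar\xi = \omega_1 \rho$ thus gives $\nu^\Psi(K)(\xi) = B(-\omega_1)$.

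Finally I compute $\omega_1$ by the same calculation used for $\mu$. Write $\xi = \omega_1 \rho + \omega_2 d$, where $d$ is a generator of $\Psi(W^*)$. In the flag case, pairing both sides with the generator $c$ of $L_1^*$ and using $\langle d, c\rangle = 0$ yields $\omega_1 = \langle \xi, c\rangle / \langle \rho, c\rangle$; in the inner-product case, using $Q(d,\rho) = 0$ yields $\omega_1 = Q(\xi,\rho)/Q(\rho,\rho)$. Substituting these into $B(-\omega_1)$ delivers the two claimed formulas. There is no genuine obstacle: the argument is a sign-tracked duplicate of the proof for $\mu^\Psi$, and the only thing that requires care is the sign inherited from Proposition \ref{prop.nu1}, which itself reflects the swap of the roles of $S$ and $I$ in the defining relation of an \IS-interpolator.
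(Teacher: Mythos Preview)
Your approach is correct and matches the paper's own, which simply states that the proofs for $\nu^\Psi$ are similar to those for $\mu^\Psi$. One minor correction: the $\Psi$-compatibility of $\nu^\Psi$ (and hence the non-pointed recursion analogous to Equation~\eqref{eq.thm1b}) is supplied by Theorem~\ref{thm.6} rather than Theorem~\ref{thm.4}, since $\nu^\Psi$ is the \ISo-interpolator, not the \IS-interpolator $\lambda^\Psi$; your closing remark should likewise refer to the swap of $S^0$ and $I$.
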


\begin{proposition}
\lbl{prop.nu2}
With the notation of Proposition \ref{prop.mu2}, we have
$$
\nu^L(K)= 
\frac{1}{\langle \xi, v_1 \rangle \langle \xi, v_2 \rangle}
-\biggl[
 \frac{e^{\langle \xi, v_1 \rangle}}{1-e^{\langle \xi, v_1 \rangle}}
B\biggl( - 
\frac{\langle \xi, c \rangle}{\langle \rho_1, c \rangle}   \biggr)
+ \frac{e^{\langle \xi, v_2 \rangle}}{1-e^{\langle \xi, v_2 \rangle}}
B\biggl( - 
\frac{\langle \xi, c \rangle}{\langle \rho_2, c \rangle}   \biggr)+
\frac{e^{\langle \xi, v_1 \rangle}e^{\langle \xi, v_2\rangle}}{
(1-e^{\langle \xi, v_1 \rangle})
(1-e^{\langle \xi, v_2\rangle})}
\biggr]
$$
and
$$
\nu^Q(K)=
\frac{1}{\langle \xi, v_1 \rangle \langle \xi, v_2 \rangle}
-\biggl[
 \frac{e^{\langle \xi, v_1 \rangle}}{1-e^{\langle \xi, v_1 \rangle}}
B\biggl( - 
\frac{Q(\xi, \rho_1)}{Q(\rho_1, \rho_1)}   \biggr)
+ \frac{e^{\langle \xi, v_2 \rangle}}{1-e^{\langle \xi, v_2 \rangle}}
B\biggl( - 
\frac {Q(\xi, \rho_2)}{Q(\rho_2, \rho_2)}      \biggr)+
\frac{e^{\langle \xi, v_1 \rangle}e^{\langle \xi, v_2\rangle}}{
(1-e^{\langle \xi, v_1 \rangle})
(1-e^{\langle \xi, v_2\rangle})}
\biggr]
.
$$
\end{proposition}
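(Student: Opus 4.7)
The plan is to read off $\nu^\Psi(K)$ directly from the defining equation of an $\ISo$-interpolator, exactly in parallel to how Proposition \ref{prop.mu2} was derived from Proposition \ref{prop.mu2line}. Since $K$ has only four faces, namely $K$, $F_1=\Cone(v_1)$, $F_2=\Cone(v_2)$, and $\{0\}$, the equation
$$
I(K) \;=\; \sum_{F\in\calF(K)} \nu^\Psi(\Supp(K,F))\, S^0(F)
$$
is linear in the single unknown $\nu^\Psi(K)$ (which appears as the coefficient of $S^0(\{0\})$, since $\Supp(K,\{0\})=K$). All other supporting cones are either the whole space $V=\Supp(K,K)$ or the two half-planes $\Supp(K,F_i)$ bounded by the line through $F_i$.

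Next I would identify each ingredient. By Proposition \ref{prop.nu0} together with $\Psi$-compatibility applied to the quotient $V\to V/V=\{0\}$, the value $\nu^\Psi(V)$ equals $1$. Also $S^0(\{0\})(\xi)=1$. The exponential integral and sums are computed directly from (A5) and the nonsingularity of $K$:
$$
I(K)(\xi)=\frac{1}{\langle\xi,v_1\rangle\langle\xi,v_2\rangle}, \qquad
S^0(F_i)(\xi)=\frac{e^{\langle\xi,v_i\rangle}}{1-e^{\langle\xi,v_i\rangle}},
$$
and since the interior lattice points of $K$ are $\{a v_1 + b v_2 : a,b\in\BZ_{\geq 1}\}$, the function $S^0(K)$ factors as the product $\frac{e^{\langle\xi,v_1\rangle}}{1-e^{\langle\xi,v_1\rangle}}\cdot\frac{e^{\langle\xi,v_2\rangle}}{1-e^{\langle\xi,v_2\rangle}}$. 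The remaining two values $\nu^\Psi(\Supp(K,F_i))$ are furnished by Proposition \ref{prop.nu2line}, specialized to $\Psi=L$ or $\Psi=Q$; the hypothesis that $\rho_i$ is the primitive normal to $F_i$ in $\Lambda^*$ matches exactly the generator of the dual ray $(\Supp(K,F_i))^\vee$ appearing in that proposition.

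Substituting these expressions into the rearranged interpolator equation
$$
\nu^\Psi(K) \;=\; I(K) - S^0(K) - \nu^\Psi(\Supp(K,F_1))\, S^0(F_1) - \nu^\Psi(\Supp(K,F_2))\, S^0(F_2)
$$
gives the stated formulas for $\nu^L(K)$ and $\nu^Q(K)$ verbatim. The calculation is mechanical; no subdivision or induction beyond what is already built into the $\Psi$-compatible $\ISo$-interpolator machinery of Theorem \ref{thm.6} is required. The only conceptual step one must be careful about is the identification $\nu^\Psi(V)=1$, which relies on the hereditary/compatibility property being applied to the maximal quotient, and this is the principal place where the $\Psi$-compatibility (rather than the bare $\ISo$-interpolator axiom) is used. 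No genuine obstacle arises.
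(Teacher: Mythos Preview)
Your proposal is correct and follows exactly the approach the paper intends: the paper states only that the proofs of Propositions \ref{prop.nu0}--\ref{prop.nu2} are ``similar'' to those of Propositions \ref{prop.mu0}--\ref{prop.mu2}, and your argument is the $\ISo$-analogue of the one-line derivation given for Proposition \ref{prop.mu2} (``using the defining property of interpolators''). Your explicit identification of $I(K)$, $S^0(K)$, $S^0(F_i)$, $S^0(\{0\})$, and the use of Propositions \ref{prop.nu0} and \ref{prop.nu2line} for the lower-dimensional supporting cones is precisely what the paper leaves implicit.
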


\subsection{Morelli's coincidence in dimensions 1 and 2}
\lbl{sub.mu=nu}

In this section we will prove Morelli's observation \eqref{eq.mu=nu} for
lattice cones of dimension at most 2, using the computations of 
$\nu^{\Psi}(K)$ for cones $K$ of dimension at most $2$ from Section 
\ref{sub.computenu}.
Using these propositions, we can prove the following relation 
between $\mu$ and $\nu$ for cones of dimension at most $2$.  
The following 
theorem, proved by Morelli in the flag case, shows that the coincidence of 
constant terms holds also in the case of inner products.

\begin{theorem}
\lbl{thm.morelli2d}
Let $K$ be a cone with vertex at 0 in $V$, and assume that the dimension of 
$V$ is at most 2.  Let $\Psi$ be a complement map on $V$ induced by an 
inner product on $V^*$ or a complete flag in $V^*$ and let $\Psi^*$ be the 
corresponding complement map on $V$. Then  we have
$$
\nu^{\Psi}(K)(0) = \mu^{\Psi^*}(K^{\vee})(0).
$$
\end{theorem}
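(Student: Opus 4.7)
The plan is to proceed by case analysis on the dimension of $V$ and on the structure of $K$, verifying the identity directly from the explicit formulas of Sections~\ref{sub.computemu} and \ref{sub.computenu}. For $\dim V \le 1$ the result is immediate: if $K = \{0\}$ both sides are $1$ by Propositions~\ref{prop.mu0} and \ref{prop.nu0}, while for $K$ a ray, Propositions~\ref{prop.mu1} and \ref{prop.nu1} give both constant terms as $B(0) = 1/2$, using that $B(z)$ and $B(-z)$ agree at $z = 0$. In dimension $2$, if $K$ contains a nonzero linear subspace $U$, then $K^\vee \subset U^\perp \subsetneq V^*$, and the hereditary property of $\nu^\Psi$ and $\mu^{\Psi^*}$ reduces the claim to the already-handled $1$-dimensional case. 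The remaining degenerate situation, in which $K$ is a ray and $K^\vee$ is a half-plane, similarly evaluates to $B(0) = 1/2$ on both sides via Propositions~\ref{prop.mu2line} and \ref{prop.nu2line}.

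The main case is $K$ a pointed $2$-dimensional cone. I would first treat the nonsingular case, $K = \Cone(v_1, v_2)$ with $\{v_1, v_2\}$ a lattice basis and $K^\vee = \Cone(\rho_1, \rho_2)$ the dual basis. Apply Proposition~\ref{prop.nu2} to compute $\nu^\Psi(K)(\xi)$ and Proposition~\ref{prop.mu2} to compute $\mu^{\Psi^*}(K^\vee)(\eta)$; then use the identity $e^z/(1-e^z) = -B(-z) - 1/z$ to rewrite the $\nu$-formula in terms of $B$. The singular cross-term $1/(\langle \xi, v_1\rangle \langle \xi, v_2\rangle)$ cancels, and Taylor expansion of the surviving $B$-differences around $\xi = 0$ produces the constant term explicitly. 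The singular pointed case is then handled by subdividing $K$ and $K^\vee$ independently into nonsingular subcones and applying additivity of $\nu^\Psi$ and $\mu^{\Psi^*}$ separately on each side; one verifies directly that the two resulting rational expressions coincide.

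The hard part will be the bookkeeping in the nonsingular computation: the generators $v_i$ and primitive normals $\rho_i$ swap roles when passing from $K$ to $K^\vee$, and the complement maps $\Psi$ on $V^*$ and $\Psi^*$ on $V$ are related by duality. One must verify that the quantity $Q(\xi, \rho_i)/Q(\rho_i, \rho_i)$ appearing in $\nu^\Psi(K)$ (for $\Psi$ arising from an inner product $Q$ on $V^*$) corresponds under these exchanges to $Q^*(\eta, v_i)/Q^*(v_i, v_i)$ appearing in $\mu^{\Psi^*}(K^\vee)$ (for $\Psi^*$ arising from the dual inner product $Q^*$ on $V$), and similarly for flags. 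Granted this correspondence, both constant terms reduce to the same rational function in the complement map data, and the theorem follows; as a sanity check, in the standard-basis case with inner product $Q = \left(\begin{smallmatrix} a & b \\ b & c \end{smallmatrix}\right)$, both sides evaluate to $(3ac + ab + bc)/(12ac)$, dual to the expression $(3ac - ab - bc)/(12ac)$ of Example~\ref{ex.2}.
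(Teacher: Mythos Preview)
Your approach is genuinely different from the paper's and, while the low-dimensional and nonsingular cases are fine, the singular $2$-dimensional case has a real gap.

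The paper does \emph{not} compute the two sides directly. Instead it first uses the inversion formula of Theorem~\ref{thm.6} to deduce, for any pointed $2$-cone $K$, the relation
\[
\mu^{\Psi}(K)(0) + \nu^{\Psi}(K)(0) = \tfrac{1}{2},
\]
which reduces the claim to the purely $\mu$-side identity $\mu^{\Psi}(K)(0) + \mu^{\Psi^*}(K^{\vee})(0) = \tfrac{1}{2}$. This is then proved by a characteristic-function argument: after identifying $V$ with $V^*$ via the inner product (or via a compatible isomorphism in the flag case), one writes $\chi_K + \chi_{K^{\vee}} = \chi_{L_1} + \chi_{L_2}$ for two right-angled cones $L_i$, and uses the rotation-invariance of the inner product to get $\mu(L_i)(0)=\tfrac14$. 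This argument is uniform in $K$ and never appeals to a nonsingular subdivision.

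Your direct computational route is viable for nonsingular $K$: the bookkeeping you flag is exactly the content of Equation~\eqref{eq.ptip2d}, and your sanity check is correct. The problem is the singular case. Subdividing $K$ into nonsingular $K_i$ gives $\nu(K)(0)=\sum_i\nu(K_i)(0)$ by the simple additivity of Theorem~\ref{thm.6}(b), and subdividing $K^{\vee}$ into nonsingular $K'_j$ gives $\mu(K^{\vee})(0)=\sum_j\mu(K'_j)(0)-\sum_k\mu(r_k)(0)$ by the valuation property. But there is no natural correspondence between the $K_i^{\vee}$ and the $K'_j$: duality sends the subdivision $K=\bigcup K_i$ to an \emph{intersection} $K^{\vee}=\bigcap K_i^{\vee}$, and the cones $K_i^{\vee}$ overlap in full-dimensional regions rather than fitting together as a fan. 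So ``one verifies directly that the two resulting rational expressions coincide'' is not a step but a hope; carrying it out would amount to proving a nontrivial identity (essentially of Dedekind-sum type) for each index, which is much harder than the paper's two-line geometric argument. If you want to salvage the computational approach, the cleanest fix is to import the relation $\mu(K)(0)+\nu(K)(0)=\tfrac12$ from Theorem~\ref{thm.6} and then argue on the $\mu$-side as the paper does.
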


\begin{proof}
For cones of dimension $0$, the constant term of both $\mu$ and $\nu$ is 
always equal to $1$.  For cones of dimension $1$, the constant term of both 
$\mu$ and $\nu$ is always $1/2$.
Now let $K$ have dimension $2$.  
From the inversion formula of Theorem \ref{thm.6}, it follows that 
$$
\mu(K)(0) + \nu(K)(0) = {1\over 2}.
$$
Hence, the desired conclusion is equivalent to 
$$
\mu(K)(0) + \mu(K^{\vee})(0) = {1 \over 2}.
$$
Suppose that $\mu$ is induced by an inner product.  The inner product gives 
us an identification of $V$ with $V^*$, allowing us to consider $K$ and 
$K^{\vee}$ as both living in the same space $V$.   If $K=\Cone(v_1,v_2)$, 
then  $K^{\vee}=\Cone(w_1,w_2)$ with $v_i$ orthogonal to $w_i$, $i=1,2$, 
under the inner product.   Furthermore, considering the orthogonal cones 
$L_1=\Cone(v_1,w_1), L_2=\Cone(v_2, w_2)$, we have the following equation 
of characteristic functions:
$$\chi_{K_1} + \chi_{K^{\vee}} =
\chi_{L_1} + \chi_{L_2},
$$
and hence
$$\mu(K_1) + \mu(K^{\vee}) =
\mu(L_1) + \mu(L_2).
$$
Now the desired result follows from the fact that $\mu(L)(0) = 1/4$ for 
orthogonal 2-dimensional cones $L$.  To see this note that $L$ and its 
image $\rot_{\pi/2}(L)$ under rotation by $\pi/2$ form a half-space, and 
rotation is an isomorphism of $V$ that preserves the inner product, hence 
$\mu(L)(0)= \mu(\rot_{\pi/2}(L))(0)$.

Now suppose $\Psi$ is induced by a complete flag in $V^*$.  This flag is 
determined by a line $U$ in $V^*$, and $\Psi^*$ is induced by the dual flag, 
determined by the dual line $U^*$ in $V$.  Now choose an identification 
$i:V\rightarrow V^*$.  Under this identification $U$ pulls back to a line 
perpendicular to $U^*$.  It follows that
$$
\mu^{\Psi^*}(K^{\vee})(0) = \mu^{\Psi}(\rot_{\pi/2}i^{-1}(K^{\vee}))(0).
$$
But $\rot_{\pi/2}i^{-1}(K^{\vee})$ and $K$ fit together to form a half-space.  
Hence
$$\mu^{\Psi}(K)(0) + \mu^{\Psi}(\rot_{\pi/2}i^{-1}(K^{\vee}))(0) = {1\over 2}.
$$
The result follows.
\end{proof}

\begin{example}
\lbl{ex.nu}
Consider the triangle $P$ from Example \ref{ex.1} with vertices 
$v_0=(0,0),v_1=(1,0),v_2=(0,1)$. For $\Psi$ corresponding to the standard 
inner product, recall that the constant terms of the $\mu$ of the vertex 
cones are $1/2$, $3/8$, and $3/8$.  According to the discussion above, we 
have that the constant terms of the $\nu$'s are found by subtracting from 
$1/2$:
$$
\nu(K_0)(0)= \frac{1}{4} \qquad \nu(K_1)(0) = \frac{1}{8} 
\qquad  \nu(K_2)(0)=\frac{1}{8}.
$$
One can then check that the volume of this triangle is given by summing  
$\nu(F)(0)$ values times the number lattice  points in the relative 
interior of $F$.  Indeed, since the only lattice points are the vertices, 
one has
$$
\Vol(P) = {1 \over 4} + {1 \over 8} + {1 \over 8} = {1 \over 2}.
$$

For the triangle from Example \ref{ex.3}, with vertices 
$v_0=(0,0),v_1=(2,0),v_2=(0,1)$, one similarly computes the $\nu(K)$ 
constant terms  (again using the standard inner product):
$$
\nu(K_0)(0)= \frac{1}{4} \qquad \nu(K_1)(0) = \frac{1}{20}
\qquad  \nu(K_2)(0)=\frac{1}{5}.
$$
This time there is one lattice point in the relative interior of an edge.  
The corresponding $\nu$ constant term is $1/2$, according to Proposition 
\ref{prop.nu2line}. Thus, we check
$$
\Vol(P) = {1 \over 4} + {1 \over 20} + {1 \over 5} + {1 \over 2} = 1.
$$
\end{example}

\ifx\undefined\bysame
        \newcommand{\bysame}{\leavevmode\hbox
to3em{\hrulefill}\,}
\fi

\end{document}